\documentclass[12pt,a4 paper]{article}
\usepackage[T1,T2A]{fontenc}
\usepackage[english]{babel}
\usepackage[utf8]{inputenc}
\usepackage{indentfirst}
\usepackage[left=30mm, top=20mm, right=25mm, bottom=25mm, nohead]{geometry}
\usepackage{csquotes}
\usepackage[backend=biber,bibencoding=utf8, style=ieee]{biblatex}
\addbibresource{phd2.bib} 
\usepackage{amsfonts}
\usepackage{amsmath}
\usepackage{amssymb}
\usepackage{amsthm}

\usepackage{mathrsfs}
\usepackage{xcolor}
\usepackage{hyperref}
\newtheorem{thm}{Theorem}[section]
\newtheorem{lmm}[thm]{Lemma}
\newtheorem{prp}[thm]{Proposition}
\newtheorem{crl}[thm]{Corollary}
\theoremstyle{definition}
\newtheorem{dfn}[thm]{Definition}
\newtheorem{ass}[thm]{Assumption}
\theoremstyle{remark}
\newtheorem{rmk}[thm]{Remark}
\newcommand{\dfnem}[1]{\textit{#1}}


\DeclareMathOperator{\Id}{Id}

\newcommand{\pp}{\mathscr{P}}
\renewcommand{\ll}{\mathscr{L}}
\newcommand{\xx}{\mathscr{X}}

\newcommand{\cc}{C^\infty_c}

\newcommand{\rr}{\mathbb{R}}
\newcommand{\rrp}{\rr_+}
\newcommand{\rrpo}{\rr_+^o}
\newcommand{\rd}{{\rr^d}}
\newcommand{\rds}{{\rr^{d*}}}

\newcommand{\hm}{{\hat{m}}}
\newcommand{\hk}{{\hat{k}}}
\newcommand{\hd}{{\hat{\delta}}}
\newcommand{\hatt}{{\hat{T}}}
\newcommand{\hpi}{{\hat{\pi}}}
\newcommand{\vk}{\varkappa}
\newcommand{\phivk}{\phi_\vk}
\newcommand{\muvk}{\mu_\vk^\varepsilon(m)}
\newcommand{\pivk}{\pi_\vk^\varepsilon(m)}
\newcommand{\pivki}{\pi_\vk^\varepsilon(m_{t_i})}
\newcommand{\gvk}{\gamma_\vk^\varepsilon(m)}
\newcommand{\yvk}{y_\vk^\varepsilon}
\newcommand{\kke}{k_\vk^\varepsilon}
\newcommand{\kkea}[2]{k_{#1}^{#2}}

\newcommand{\traj}[3]{\mathsf{X}^{{#1},{#2}}_{m_\cdot}(#3)}
\newcommand{\trajm}[3]{m_{#1}\left[m_*,#2,#3\right]}
\newcommand{\trajset}{\mathsf{\Theta}}

\newcommand{\tran}{\top}
\newcommand{\proj}{\mathfrak{p}}
\newcommand{\psd}{\partial_P^\varepsilon}
\renewcommand{\div}{\operatorname{div}}
\newcommand{\ball}{\mathsf{B}}

\newcommand{\oall}{\mathsf{O}}
\newcommand{\ord}{\mathsf{\Lambda}}
\newcommand{\ucf}{\mathsf{UC}_{\hm}}

\newcommand{\supin}{\mathcal{S}}
\newcommand{\infout}{\mathcal{I}}
\newcommand{\maxrad}{\mathcal{R}}
\newcommand{\modcon}{\omega_R^\varepsilon}
\newcommand{\bbrM}{\mathcal{M}^\varepsilon(R)}
\newcommand{\bbrMa}[1]{\mathcal{M}^{#1}(R)}
\newcommand{\bbr}{\mathcal{M}_\vk^\varepsilon(R)}
\newcommand{\bbra}[2]{\mathcal{M}_{#1}^{#2}(R)}
\newcommand{\bbrc}{\mathcal{K}_\vk^\varepsilon(R)}
\newcommand{\bbrh}{\mathcal{N}_\vk^\varepsilon(R)}
\newcommand{\bbrha}[2]{\mathcal{N}_{#1}^{#2}(R)}
\newcommand{\dlt}{\Delta}

\newcommand{\ds}{\displaystyle}

\title{Stabilization of solutions of the controlled non-local continuity equation}
\author{Volkov~A.}
\begin{document}
\maketitle
\section{Introduction}
The paper addresses the following problem: given a controlled non-local continuity equation
\[
\partial_t m_t + \div \big(f(x,m_t,u) m_t\big) = 0
\]
with an arbitrary initial distribution $m_*$, we aim to find a feedback control law that stabilizes the system to a given state.
The non-local continuity equation describes the evolution of an infinite system of identical particles, where the function $f$ represents a vector field that governs the motion of each particle based on its current position $x$ and the current spatial distribution $m$.
Note that the state variable is $m$, which is a probability measure.

The continuity equation is used to model various phenomena, including systems of charged particles~\cite{Boffi_1984}, the behavior of supermassive black holes~\cite{Tucci_2017} and large groups of animals~\cite{Botsford_1981}, dynamics of biological processes~\cite{Di_Mario_1993} and public opinion~\cite{Pluchino_2005}.

The problem under consideration traces back to similar issues in finite-dimensional cases.
Most results have been obtained for linear systems with continuous (or even smooth) controls~\cite{Coron_1992,Ikeda_1972,Sontag_1993,Arnold_1983,Kwon_1980}.
Attempts to generalize these results have often encountered counterexamples and the need for stronger assumptions~\cite{Brocket_1983,Coron_1995,Sontag_1980,Terrell_2009}.
A breakthrough in this area was achieved in~\cite{Clarke_1997,Clarke_1998,Sontag_1993}, where discontinuous synthesis was introduced based on the extremal shift rule, originally proposed for zero-sum differential games by Krasovskii and Subbotin in~\cite{Krasovskii_1974}.
The method developed in~\cite{Clarke_1998} applies to a broad class of nonlinear systems.
It relies on the concept of a control-Lyapunov pair (CLP), which consists of a Lyapunov function and an upper bound on its derivative.
Note that the construction of the stabilizing synthesis employs inf-convolution and techniques from proximal calculus.
It is shown that the existence of a CLP is sufficient to construct a stabilizing synthesis, and the opposite also holds.

In this paper, we extend the results of~\cite{Clarke_1998} to the case of a non-local continuity equation.
However, this extension is not straightforward because the state space now is neither linear nor $\sigma$-compact.
To overcome this, we employ variational principles and concepts from non-smooth analysis in the Wasserstein space~\cite{Ekeland_1974,Clarke_1998,Cardaliaguet_2019,Ambrosio_2005}.
We redefine the notion of a control-Lyapunov pair and prove that its existence is sufficient for both local and global stabilization.

The rest of the paper is organized as follows.

Section~\ref{sec:term} provides general definitions and notations used throughout the paper.
Here, we define the solution of the non-local continuity equation and discuss some of its properties.

Section~\ref{sec:inf} presents the first component of the approach developed in~\cite{Clarke_1998}, namely inf-convolution.
Due to the infinite-dimensional nature of the state space, we cannot guarantee the existence of a minimizer.
Instead, we use Ekeland's variational principle to get a pseudo-minimizer.

In Section~\ref{sec:sub}, we introduce a proximal subgradient for functions defined on the Wasserstein space, which serves as the second component of our approach.
Additionally, we show that every pseudo-minimizer of the inf-convolution is a barycenter of the proximal subgradient.

Local stabilization is discussed in Section~\ref{sec:local}.
Here, we introduce a feedback synthesis based on inf-convolution and prove that this synthesis achieves local stabilization.
Our proof relies heavily on the proximal subgradient and the control-Lyapunov pair.

Finally, Section~\ref{sec:global} addresses global stabilization.
We define the concept and construct a feedback law that achieves it.
The construction involves the sequential implementation of local stabilizing feedback laws.
\section{Preliminaries} \label{sec:term}
\subsection{General notions}

We use the following notation:

\begin{itemize}
\item $\rrp$ denotes the closed half-line $[0,+\infty)$;
\item $\rrpo$ denotes the open half-line $(0,+\infty)$;
\item $\rd$ is the space of $d$-dimensional column-vectors;
\item $\rds$ is the space of $d$-dimensional row-vectors;
\item ${}^\tran$ denotes the transpose operation;
\item $\ord^2$ is the set of ordered pairs of positive numbers, i.e.
\[
\ord^2 \triangleq \big\{ (x_1,x_2) \in \rr^2\ :\ 0 < x_1 < x_2 \big\}.
\]

\item If $(\Omega,\mathscr{F})$ and $(\Omega',\mathscr{F}')$ are measurable spaces, $m$ is a measure on the $\sigma$-algebra $\mathscr{F}$, and $g: \Omega \to \Omega'$ is $\mathscr{F}/\mathscr{F}'$-measurable, then $g\sharp m$ denotes the push-forward measure of $m$ through $g$, defined by:
\[
(g\sharp m)(\Upsilon)
= m\left( g^{-1}(\Upsilon) \right)
\]
for every $\Upsilon \in \mathscr{F}'$.

In particular, for any measurable functions $f: \Omega' \to \rr$ and $g: \Omega \to \Omega'$, the following equality holds:
\[
\int_{\Omega'} f(y)\ (g\sharp m)(dy)
= \int_\Omega f\big(g(x)\big)\ m(dx)
\]
\item $\ds \pp(\xx)$ stands for the space of Borel probability measures over $\xx$.
\item $\varsigma_2(\mu)$ denotes the square root of the second moment of the measure $\mu$, i.e.
\[
\varsigma_2(\mu) \triangleq \left( \int_\rd \rho(x,x_0)^2\ \mu(dx) \right)^{1/2} = \|\rho(\,\cdot\,,x_0)\|_{\ll_2(\mu)}, \text{ for arbitrary fixed } x_0.
\]
\item $\ds \pp_2(\xx)$ is the subset of $\ds \pp(\xx)$ consisting of measures with finite second moments.

\item $\ds\ball_R(x)$ denotes the closed ball of radius $R$ centered at $x$ in a metric space $(\xx,\rho)$, i.e.
\[
\ball_R(x) \triangleq \{ y \in \xx:\ \rho(x,y) \leq R \}.
\]
\item $\ds\oall_R(x)$ denotes the open ball of radius $R$ centered at $x$ in $(\xx,\rho)$, i.e.
\[
\oall_R(x) \triangleq \{ y \in \xx:\ \rho(x,y) < R \}.
\]
\item $\Id: \xx \to \xx$ is the identity operator defined by $\ds \Id(x) = x$ for all $x \in \xx$.
\item For $x =(x_i)_{i=1}^n \in \xx \times \dots \times \xx$ and an arbitrary multi-index $I$, $\proj^I$ denotes the projection operator:
\[
\proj^I(x) = (x_i)_{i \in I}.
\]
\end{itemize}
\medskip

For $\xx = \rd$, we additionally define:
\begin{itemize}
\item $\ll_2(\mu)$ denotes the space of Borel functions $\phi: \rd \to \rd$ satisfying:
\[
\int_\rd \| \phi(x) \|^2\ \mu(dx) < +\infty,
\]
with norm:
\[
\|\phi\|_{\ll_2(\mu)} \triangleq \left( \int_\rd \|\phi(x)\|^2\ \mu(dx) \right)^{1/2}.
\]
\item $\cc(\rd)$ stands for the space of infinitely differentiable functionals in $\rd$ with compact support.
\end{itemize}

\begin{dfn}[{\cite[Problem 1.2 and Section 5.1]{Santambrogio_2015}}]
A \dfnem{plan} between measures $\mu \in \pp(\rd)$ and $\nu\in \pp(\rd)$ is a measure $\pi \in \pp(\rd\times\rd)$ such that, for all Borel sets $A, B \subseteq \rd$,
\[
\pi(A \times \rd) = \mu(A), \quad \pi(\rd \times B) = \nu(B).
\]
The set of all plans is denoted by $\Pi(\mu,\nu)$.
\end{dfn}

\begin{dfn}[{\cite[Problem 1.2 and Section 5.1]{Santambrogio_2015}}] \label{dfn:w}
The \dfnem{Wasserstein distance} between measures $\mu\in\pp_2(\rd)$ and $\nu\in\pp_2(\rd)$ is defined by the rule:
\[
W_2(\mu,\nu) \triangleq \left(\inf\limits_{\pi\in\Pi(\mu,\nu)} \int\limits_{\rd\times\rd} \|x-y\|^2\ \pi(dxdy) \right)^{1/2}.
\]
A plan $\pi$ providing the infimum is called \dfnem{optimal}; the set of optimal plans is denoted by $\Pi_o(\mu,\nu)$.
\end{dfn}

\begin{rmk} \label{rmk.plan.wp}
Key properties of the Wasserstein distance are proved in~\cite{Santambrogio_2015}.
\begin{enumerate}
\item Set $\Pi_o$ is always non-empty (\cite[Problem~1.2 and Theorem~1.7] {Santambrogio_2015}).
\item The Wasserstein distance is a metric on the space $\pp_2(\rd)$(\cite[Proposition~5.1]{Santambrogio_2015}).
\item The metric space $\big(\pp_2(\rd), W_2\big)$ is Polish (\cite[Theorem~5.11]{Santambrogio_2015}).
\end{enumerate}
\end{rmk}

\begin{dfn}[{\cite[Definition 10.4.2]{Bogachev_2007}}]
Let $\xx_n = (\rd)^{\times n}$.
For measure $\pi \in \pp(\xx_n)$ and natural number $i < n$, a family of measures $\{ \hpi^i(\cdot | x_i) \}_{x_i \in \rd} \subseteq \pp(\xx_{n-1})$ is a \dfnem{disintegration of $\pi$ by the $i$-th variable} if for every bounded continuous $\phi: \xx_n \to \rr$:
\begin{multline*}
\int\limits_{\xx_n} \phi(x_1,\dots,x_n)\ \pi(dx_1 \dots dx_n)\\
= \int\limits_{\rd} \left( \int_{\xx_{n-1}} \phi(x_1,\dots,x_n)\ \hpi^i(dx_1 \dots dx_{i-1} dx_{i+1} \dots dx_n | x_i) \right)\ (\proj^i\sharp\pi)(dx_i),
\end{multline*}
\end{dfn}

\begin{rmk}
It follows from~\cite[Theorem~10.4.10]{Bogachev_2007}, that disintegrations exist for every measure of $\pp(\rd\times\dots\times\rd)$ by each variable.
\end{rmk}

\begin{dfn}
$\ucf$ is the class of functions $\phi: \pp_2(\rd) \to \rrpo$ those are uniformly continuous on every bounded set, and positive-definite at the point $\hm$, i.e., satisfying $\phi(\hm) = 0$ and $\phi(m) > 0$ for all $m \neq \hm$.
\end{dfn}
\subsection{Non-local continuity equation}
The main object of this paper is the initial value problem for the controlled non-local continuity equation
\begin{equation}\label{eq:sys}
\partial_t m_t + \div \big(f(x,m_t,u) m_t\big) = 0, \quad m_0 = m_*,
\end{equation}
where $m_* \in \pp_2(\rd)$, $U$ is a sequential compact and $f: \rd \times \pp_2(\rd) \times U \to \rd$ is a vector field.
As it was mentioned above, this equation describes an evolution of infinite particle system where each particle follows:
\begin{equation}\label{eq:sys.x}
\dot{x} = f(x,m_t,u).
\end{equation}

We assume the following condition:

\begin{ass} \label{ass.lip}
The function $f$ is continuous in $u$ and Lipschitz continuous in $x$ and $m$: there exists a constant $C_0 > 0$ such that, for every $x,y \in \rd$, $\mu,\nu \in \pp_2(\rd)$, and $u \in U$,
\[
\|f(x,\mu,u) - f(y,\nu,u)\|
\leq C_0 \big(\|x - y\| + W_2(\mu,\nu)\big).
\]
\end{ass}

\begin{rmk} \label{ass.ulin}
Setting $\nu=\delta_0$ and $y=0$ in Assumption~\ref{ass.lip} implies that function $f$ has sublinear growth: there exists a constant $C_1 > 0$, such that, for every $x \in \rd$, $\mu \in \pp_2(\rd)$ and $u \in U$,
\[
\|f(x,\mu,u)\|
\leq C_1\big(1 + \|x\| + \varsigma_2(\mu)\big).
\]
\end{rmk}

Now, we define feedback strategies and corresponding solutions to~\eqref{eq:sys}:

\begin{dfn} \mbox{}
\begin{itemize}
\item A \dfnem{feedback} is a function $k: \pp_2(\rd) \to U$.
\item A \dfnem{partition} of the half-line $[0,+\infty)$ is an infinite sequence $\theta = \{ t_i \}_{i=0}^{\infty}$ with $t_0 = 0$, $t_i < t_j$ whenever $i < j$, and $\lim\limits_{i\to\infty} t_i = \infty$.
\item $\trajset(\delta_1,\delta_2)$ denotes the set of partitions whose interval lengths are from $[\delta_1,\delta_2]$, i.e.
\[
\trajset(\delta_1,\delta_2) \triangleq \big\{\ \{ t_i \}_{i=0}^{\infty}\ :\ \delta_1 \leq t_{i+1} - t_i \leq \delta_2 \big\}.
\]
\item A measure-valued function $m_\cdot: \rrp \to \pp_2(\rd)$ is a \dfnem{solution} of continuity equation~\eqref{eq:sys} on the interval $[t_i,t_{i+1}]$ generated by fixed control $u \in U$ if for all $\varphi \in \cc\big(\rd \times (t_i,t_{i+1});\rr\big)$:
\[
\int_{t_i}^{t_{i+1}} \int_{\rd} \big( \partial_t \varphi(x,t) + \nabla_x \varphi(x,t) \cdot f(x,m_t,u) \big)\ m_t(dx)dt = 0.
\]
\item For a partition $\theta$, \dfnem{$\theta$-trajectory} of continuity equation~\eqref{eq:sys} under feedback $k$ is constructed by sequentially solving the initial value problems
\begin{gather*}
\partial_t m_t^i + \div \Big(f\big(x,m_t^i,k(m_{t_i}^i)\big) m_t^i\Big) = 0, \quad t \in [t_i,t_{i+1}], \\
m_0^0 = m_*, \quad m_{t_i}^i = m_{t_i}^{i-1}
\end{gather*}
and defining
\[
m_t = m_t^i, \quad \text{for } t \in [t_i,t_{i+1}],
\]
This trajectory at the time $t$ is denoted by $\trajm{t}{\theta}{k}$.

\item For a fixed $\theta$-trajectory $m_\cdot$ the solution of the initial value problem for a single particle~\eqref{eq:sys.x} with initial condition $x(s) = z$ evaluated at the time $r$ is denoted by $\ds \traj{s}{z}{r}$.
\end{itemize}
\end{dfn}

\begin{rmk}
Existence and uniqueness of solution on the finite interval are proved in~\cite{Averboukh_2022} under more general assumptions.
\end{rmk}
\medskip

Finally, we provide technical results for some fixed $\hm \in \pp_2(\rd)$ and $R \geq 0$ those was proved in~\cite[Proposition~5.2 and Proposition~2.16]{Averboukh_2025} (see also~\cite[Proposition~A.2 and Proposition~2.16]{Averboukh_2024_arxiv} for english version).

\begin{prp} \label{prp:prev24}
If $t \in [t_i,t_{i+1}]$, $m_\cdot = \trajm{\cdot}{\theta}{k}$, $m_{t_i} \in \ball_R(\hm)$ and $(t_{i+1} - t_i) < \delta$, then, for all $t \in [t_i,t_{i+1}]$,
\[
W_2(m_t,m_{t_i})
\leq C_2(R) \cdot (t - t_i),
\]
where
\[
C_2(R)
= C_1 \cdot e^{C_1 \delta} \cdot \left(1 + 2(C_1 \delta + \varsigma_2(\hm) + R) \cdot e^{2 C_1 \delta}\right).
\]
\end{prp}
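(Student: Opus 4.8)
The plan is to exploit the representation of the solution of the continuity equation by its characteristic flow and thereby reduce the Wasserstein estimate to an $\ll_2(m_{t_i})$-estimate on the displacement of that flow. On the interval $[t_i,t_{i+1}]$ the control $u = k(m_{t_i})$ is frozen, so the velocity field $v_\tau(\cdot) \triangleq f(\cdot,m_\tau,u)$ is, by Assumption~\ref{ass.lip}, Lipschitz in $x$ uniformly in $\tau$; the standard superposition representation for continuity equations with such a field then gives $m_t = \traj{t_i}{\cdot}{t}\sharp m_{t_i}$. The transport plan $\pi = \big(\Id,\traj{t_i}{\cdot}{t}\big)\sharp m_{t_i}$ belongs to $\Pi(m_{t_i},m_t)$, whence
\[
W_2(m_t,m_{t_i}) \leq \big\| \traj{t_i}{\cdot}{t} - \Id \big\|_{\ll_2(m_{t_i})}.
\]
First I would write the displacement along characteristics as $\traj{t_i}{z}{t} - z = \int_{t_i}^t f\big(\traj{t_i}{z}{\tau},m_\tau,u\big)\,d\tau$ and apply Minkowski's integral inequality in $\ll_2(m_{t_i})$, so that the task is reduced to bounding $\big\| f(\traj{t_i}{\cdot}{\tau},m_\tau,u) \big\|_{\ll_2(m_{t_i})}$ uniformly for $\tau$ in the interval.

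The main obstacle is the non-locality: the field depends on the still-unknown measure $m_\tau$ through $\varsigma_2(m_\tau)$, so the estimate must be closed self-consistently. The key step is therefore a uniform bound on the second moment. Using the sublinear growth from Remark~\ref{ass.ulin} together with the identity $\big\| \|\traj{t_i}{\cdot}{\tau}\| \big\|_{\ll_2(m_{t_i})} = \varsigma_2(m_\tau)$ (which follows from $m_\tau = \traj{t_i}{\cdot}{\tau}\sharp m_{t_i}$), Minkowski's inequality gives
\[
\varsigma_2(m_\tau) \leq \varsigma_2(m_{t_i}) + C_1\int_{t_i}^\tau \big(1 + 2\varsigma_2(m_s)\big)\,ds.
\]
Since $\varsigma_2$ is the Wasserstein distance to a fixed Dirac mass, the triangle inequality and $m_{t_i}\in\ball_R(\hm)$ yield $\varsigma_2(m_{t_i}) \leq \varsigma_2(\hm) + R$; Grönwall's inequality with the bound $\tau - t_i \leq \delta$ then produces the uniform estimate $\varsigma_2(m_\tau) \leq \big(C_1\delta + \varsigma_2(\hm) + R\big)e^{2C_1\delta}$, which is exactly the inner factor of $C_2(R)$ and explains the rate $2C_1$ (both the state norm and $\varsigma_2$ itself feed into the growth bound).

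Finally I would feed this second-moment bound back into the displacement estimate. Two routes are available: the direct one applies the growth bound inside the $\ll_2(m_{t_i})$-norm to obtain $\big\| f(\traj{t_i}{\cdot}{\tau},m_\tau,u) \big\|_{\ll_2(m_{t_i})} \leq C_1\big(1 + 2\varsigma_2(m_\tau)\big)$ and integrates, already giving a constant dominated by $C_2(R)$; to reproduce the stated $C_2(R)$ verbatim one instead controls each trajectory pointwise by a further Grönwall argument (rate $C_1$, yielding the outer factor $e^{C_1\delta}$) before taking the $\ll_2(m_{t_i})$-norm. Either way, integrating over $[t_i,t]$ and collecting constants gives $W_2(m_t,m_{t_i}) \leq C_2(R)(t-t_i)$. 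I expect the only delicate points to be the justification of the superposition representation and the interchange of integration with the $\ll_2$-norm via Minkowski's inequality; the remaining Grönwall bookkeeping is routine.
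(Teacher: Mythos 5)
Your proposal is correct: the superposition representation $m_t = \traj{t_i}{\cdot}{t}\sharp m_{t_i}$, the induced plan $\big(\Id,\traj{t_i}{\cdot}{t}\big)\sharp m_{t_i}$, Minkowski's integral inequality, and the self-consistent Gr\"onwall bound $\varsigma_2(m_\tau)\leq\big(C_1\delta+\varsigma_2(\hm)+R\big)e^{2C_1\delta}$ close the estimate, and your pointwise-Gr\"onwall variant (rate $C_1$, then taking the $\ll_2(m_{t_i})$-norm and using $\varsigma_2(m_{t_i})\leq\varsigma_2(\hm)+R$) reproduces the stated constant $C_2(R)$ verbatim. The paper itself does not prove this proposition but defers it to~\cite{Averboukh_2025} (Proposition~5.2); the very structure of $C_2(R)$ --- the outer factor $C_1e^{C_1\delta}$ from the characteristic-flow Gr\"onwall estimate and the inner factor from the rate-$2C_1$ second-moment bound --- shows the cited proof follows essentially the same route you sketched.
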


\begin{prp} \label{prp:prev25}
Under the assumptions of Proposition~\ref{prp:prev24}, for all $t \in [t_i,t_{i+1}]$,
\[
\left( \int_\rd \| f\big(\traj{t_i}{x_i}{t},m_t,k(m_{t_i})\big) - f\big(x_i,m_{t_i},k(m_{t_i})\big) \|^2\ m_{t_i}(dx_i) \right)^{1/2}
\leq C_3(R) (t - t_i),
\]
where
\[
C_3(R)
= C_0 \cdot \left( C_1 \cdot e^{C_1 \delta} \cdot \left(1 + 2(C_1 \delta +\varsigma_2(\hm) + R) \cdot e^{2 C_1 \delta} \right) + C_2(R) \right).
\]
\end{prp}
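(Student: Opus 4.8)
The plan is to reduce everything to the Lipschitz estimate of Assumption~\ref{ass.lip} and then control the two resulting error terms separately — the one coming from the displacement of characteristics and the one coming from the distance between $m_t$ and $m_{t_i}$, invoking Proposition~\ref{prp:prev24} for the latter. Write $u \triangleq k(m_{t_i})$ throughout and fix $t \in [t_i, t_{i+1}]$. Applying Assumption~\ref{ass.lip} with the common control $u$ gives, for $m_{t_i}$-a.e.\ $x_i$,
\[
\big\| f(\traj{t_i}{x_i}{t}, m_t, u) - f(x_i, m_{t_i}, u) \big\| \leq C_0 \big( \| \traj{t_i}{x_i}{t} - x_i \| + W_2(m_t, m_{t_i}) \big).
\]
Taking the $\ll_2(m_{t_i})$-norm of both sides and using the triangle inequality in $\ll_2(m_{t_i})$ — together with the fact that $W_2(m_t, m_{t_i})$ is constant in $x_i$ and $m_{t_i}$ is a probability measure — I would bound the left-hand side of the claim by
\[
C_0 \left[ \left( \int_\rd \| \traj{t_i}{x_i}{t} - x_i \|^2\ m_{t_i}(dx_i) \right)^{1/2} + W_2(m_t, m_{t_i}) \right].
\]
By Proposition~\ref{prp:prev24} the second summand is at most $C_2(R)(t - t_i)$, which already accounts for the ``$+\, C_2(R)$'' part of $C_3(R)$.

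It then remains to show that the displacement term satisfies
\[
\left( \int_\rd \| \traj{t_i}{x_i}{t} - x_i \|^2\ m_{t_i}(dx_i) \right)^{1/2} \leq C_2(R)(t - t_i),
\]
which supplies the remaining factor $C_0 \cdot C_2(R)$. For this I would use that $s \mapsto \traj{t_i}{x_i}{s}$ solves~\eqref{eq:sys.x} with control $u$ and initial datum $x_i$, so that $\traj{t_i}{x_i}{t} - x_i = \int_{t_i}^t f(\traj{t_i}{x_i}{s}, m_s, u)\, ds$, together with the characteristic representation $m_s = \traj{t_i}{\cdot}{s} \sharp m_{t_i}$ of the $\theta$-trajectory. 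Minkowski's integral inequality followed by the push-forward identity then yields
\[
\left( \int_\rd \| \traj{t_i}{x_i}{t} - x_i \|^2\ m_{t_i}(dx_i) \right)^{1/2} \leq \int_{t_i}^t \left( \int_\rd \| f(z, m_s, u) \|^2\ m_s(dz) \right)^{1/2} ds.
\]

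Finally I would bound the integrand uniformly in $s$ by $C_2(R)$. The sublinear growth bound of Remark~\ref{ass.ulin} and Minkowski's inequality give $\left( \int_\rd \| f(z, m_s, u) \|^2 m_s(dz) \right)^{1/2} \leq C_1(1 + 2\varsigma_2(m_s))$, so the remaining task is to estimate $\varsigma_2(m_s)$. Using $\varsigma_2(m_s)^2 = \int_\rd \| \traj{t_i}{x_i}{s} \|^2 m_{t_i}(dx_i)$ (taking $x_0 = 0$), the same integral representation and push-forward identity lead to the Gronwall-type inequality $\varsigma_2(m_s) \leq \varsigma_2(m_{t_i}) + C_1(s - t_i) + 2C_1\int_{t_i}^s \varsigma_2(m_r)\, dr$; since $m_{t_i} \in \ball_R(\hm)$ yields $\varsigma_2(m_{t_i}) \leq \varsigma_2(\hm) + R$, Gronwall's lemma produces $\varsigma_2(m_s) \leq (\varsigma_2(\hm) + R + C_1\delta)e^{2C_1\delta}$ for all $s \in [t_i, t_{i+1}]$. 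Substituting back shows the integrand is at most $C_1(1 + 2(\varsigma_2(\hm)+R+C_1\delta)e^{2C_1\delta}) \leq C_2(R)$, which completes the displacement bound and hence the claim via $C_0(C_2(R) + C_2(R)) = C_3(R)$. The main obstacle I anticipate is not any single estimate but the bookkeeping of constants in this last moment bound — in particular checking that they fit under $C_2(R)$ (the estimate is in fact slightly sharper, the surplus factor $e^{C_1\delta}$ leaving room to spare) — while the genuinely structural input is the characteristic representation $m_s = \traj{t_i}{\cdot}{s}\sharp m_{t_i}$, which is what turns the pointwise particle estimate into an estimate against $m_s$.
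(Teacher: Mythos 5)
Your proof is correct and, modulo the fact that the paper does not prove this proposition itself but quotes it from~\cite{Averboukh_2025}, it follows exactly the route that the structure of the constants dictates: the Lipschitz split $C_0\big(\|\traj{t_i}{x_i}{t}-x_i\| + W_2(m_t,m_{t_i})\big)$, Proposition~\ref{prp:prev24} for the second summand, and Minkowski's integral inequality plus the push-forward identity $m_s=\traj{t_i}{\cdot}{s}\sharp m_{t_i}$ plus the Gronwall moment bound $\varsigma_2(m_s)\leq(\varsigma_2(\hm)+R+C_1\delta)e^{2C_1\delta}$ for the first, recovering precisely $C_3(R)=C_0\big(C_2(R)+C_2(R)\big)$. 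The constant bookkeeping checks out, including the spare factor $e^{C_1\delta}$ you point out.
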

\section{Inf-convolution} \label{sec:inf}
\begin{dfn}
Let $\phi$ be l.s.c., $\vk \in (0,1]$ and $m \in \pp_2(\rd)$.
The \dfnem{inf-convolution} of the function $\phi$ with parameter $\vk$ at the point $m$ is defined as:
\begin{equation} \label{eq:inf}
\phivk(m)
\triangleq \inf\limits_{\substack{\mu \in \pp_2(\rd) \\ \pi \in \Pi(m,\mu)}}
\left[
\phi(\mu) + \frac1{2\vk^2} \int_{(\rd)^2} \|y-x\|^2\ \pi(dxdy).
\right]
\end{equation}

This operation generalizes the seminal Yosida-Moreau regularization (see~\cite{Yosida_1948,Moreau_1965,Showalter_2013}) to Wasserstein spaces.
\end{dfn}

\begin{rmk}
It is enough to consider only optimal plans $\pi$, i.e.,
\[
\phivk(m)
= \inf\limits_{\mu \in \pp_2(\rd)}
\left[
\phi(\mu) + \frac1{2\vk^2} W_2^2(m,\mu)
\right].
\]
\end{rmk}

\begin{rmk} \label{rmk:phivk.less.phi}
Considering the identical plan we get that $\phivk(m) \leq \phi(m)$ for every measure $m \in \pp_2(\rd)$ and parameter $\vk \in (0,1]$.
\end{rmk}

\begin{rmk} \label{rmk:ekeland}
Since the Wasserstein space is not finite-dimensional, we cannot ensure the existence of a minimizer.
In that case we use the Ekeland's variational principle introduced in~\cite{Ekeland_1974}: for all $m \in \pp_2(\rd)$, $\vk \in (0,1]$, and $\varepsilon > 0$, there exists a measure $\muvk \in \pp_2(\rd)$ such that, for all $\mu \in \pp_2(\rd)$,
\begin{enumerate}
\item $\ds \phi\big(\muvk\big) + \frac1{2\vk^2} W_2^2\big(m,\muvk\big) \leq \phi(m) - \varepsilon \cdot W_2(\muvk,m)$,
\item $\ds \phi\big(\muvk\big) + \frac1{2\vk^2} W_2^2\big(m,\muvk\big) \leq \phi(\mu) + \frac1{2\vk^2} W_2^2(m,\mu) + \varepsilon \cdot W_2(\muvk,\mu)$.
\end{enumerate}
\end{rmk}

Further, we need some additional notions using a target measure $\hm$.

\begin{dfn} \label{dfn:s.i.g.r}
Given measure $\hm \in \pp_2(\rd)$, numbers $R > 0$, $\vk \in (0,1]$ and function $\phi \in \ucf$, define the functions
\[
\supin(R) \triangleq \sup\limits_{\mu \in \ball_R(\hm)} \phi(\mu), \quad
\infout(R) \triangleq \inf\limits_{\mu \notin \oall_R(\hm)} \phi(\mu),
\]
the sets
\[
G_R \triangleq \left\{ \mu\ \Big|\ \phi(\mu) \leq \frac12 \infout(R) \right\}, \quad
G_R^\vk \triangleq \left\{ \mu\ \Big|\ \phivk(\mu) \leq \frac12 \infout(R) \right\},
\]
and the function $\maxrad(R) \triangleq \sup\{ r \geq 0\ |\ \ball_r(\hm) \subseteq G_R \}$.
\end{dfn}

The following relations are easy to prove.
\begin{prp}
Let $\hm \in \pp_2(\rd)$, $\phi \in \ucf$, $R > 0$, and $\vk \in (0,1]$.
Then:
\begin{enumerate}
\item $\infout(R) \leq \supin(R)$;
\item $\ds \maxrad(R) > 0$;
\item $\ds \lim_{R \to 0} \supin(R) = \lim \maxrad(R) = 0$;
\item $\ds \lim_{R \to \infty} \infout(R) = \lim \maxrad(R) = \infty$;
\item $\ds \oall_{\maxrad(R)}(\hm) \subseteq G_R \subseteq G_R^\vk$.
\end{enumerate}
\end{prp}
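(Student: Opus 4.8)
The plan is to exploit the monotonicity of the three functions in $R$ and to reduce every assertion to the behaviour of $\phi$ near $\hm$ and far from $\hm$. First I would record that $\supin$ and $\infout$ are non-decreasing in $R$ (a larger ball enlarges the supremum, a smaller exterior enlarges the infimum), and that, since $G_R=\{\phi\leq\frac12\infout(R)\}$ grows with $\infout(R)$, the function $\maxrad$ is non-decreasing as well. Item~1 I would settle directly by producing a measure at Wasserstein distance exactly $R$ from $\hm$: for any $v\in\rd$ with $\|v\|=R$ the translate $\mu=(\Id+v)\sharp\hm$ satisfies $W_2(\mu,\hm)=R$, the upper bound coming from the translation plan and the lower bound from Jensen's inequality applied to $\int(x-y)\,\gamma(dxdy)=-v$ over an arbitrary $\gamma\in\Pi(\hm,\mu)$. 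Then $\mu$ lies simultaneously in $\ball_R(\hm)$ and outside $\oall_R(\hm)$, whence $\infout(R)\leq\phi(\mu)\leq\supin(R)$. Item~5 is immediate from the definitions: any $\mu\in\oall_{\maxrad(R)}(\hm)$ lies in some $\ball_r(\hm)\subseteq G_R$ with $r<\maxrad(R)$, while $G_R\subseteq G_R^\vk$ follows at once from $\phivk\leq\phi$ (Remark~\ref{rmk:phivk.less.phi}).

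Next I would treat the one-sided limits of $\supin$ and $\infout$. The relation $\lim_{R\to0}\supin(R)=0$ is exactly continuity of $\phi$ at $\hm$: as $\phi$ is uniformly continuous on the bounded set $\ball_1(\hm)$ and $\phi(\hm)=0$, the quantity $\supin(R)=\sup_{\ball_R(\hm)}\phi$ tends to $\phi(\hm)=0$. The same continuity gives $\infout(R)\to0$ as $R\to0$, since choosing $\mu\neq\hm$ with $\phi(\mu)$ small forces $\infout(R)\leq\phi(\mu)$ for every $R\leq W_2(\mu,\hm)$. The substantive points are the strict positivity $\infout(R)>0$ for each fixed $R>0$ (needed for item~2) and the divergence $\infout(R)\to\infty$ as $R\to\infty$ (item~4). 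For these I would rely, beyond mere continuity, on the positive-definiteness of $\phi$ together with its coercive (radially unbounded) behaviour, bounding $\phi$ from below on each exterior $\{\mu:W_2(\mu,\hm)\geq R\}$ by a positive and increasing quantity, e.g.\ via a Lyapunov-type estimate $\phi(\mu)\geq\alpha\big(W_2(\mu,\hm)\big)$ with $\alpha$ vanishing only at $0$ and unbounded; this yields $\infout(R)\geq\alpha(R)>0$ and $\infout(R)\to\infty$ simultaneously.

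I expect this last step to be the main obstacle. In the Wasserstein space bounded sets are not compact, so one cannot simply extract a minimiser of $\phi$ on the closed but non-compact exterior $\{W_2(\cdot,\hm)\geq R\}$ and deduce positivity from $\phi>0$: mass can escape to infinity or concentrate while keeping $W_2(\cdot,\hm)\geq R$, which is precisely why continuity and plain positive-definiteness alone do not force $\infout(R)>0$, and why the coercivity/lower-growth property of the class $\ucf$ is indispensable here. Granting it, item~2 follows by combining $\supin(r)\to0$ with $\infout(R)>0$: pick $r>0$ so small that $\supin(r)\leq\frac12\infout(R)$, so $\ball_r(\hm)\subseteq G_R$ and hence $\maxrad(R)\geq r>0$. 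Finally the two $\maxrad$-limits come from monotonicity. For $\lim_{R\to0}\maxrad(R)=0$, were the limit some $L>0$ then $\ball_{L/2}(\hm)\subseteq\oall_{\maxrad(R)}(\hm)\subseteq G_R$ for all $R$, and letting $R\to0$ with $\infout(R)\to0$ would force $\phi\equiv0$ on $\ball_{L/2}(\hm)$, contradicting positive-definiteness; for $\lim_{R\to\infty}\maxrad(R)=\infty$, fixing any $r$ and choosing $R$ with $\infout(R)\geq2\supin(r)$ gives $\maxrad(R)\geq r$.
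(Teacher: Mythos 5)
You are right to single out the positivity of $\infout(R)$ as the crux, but your proposal does not actually close it: at the decisive moment you ``grant'' a minorant $\phi(\mu)\ge\alpha\big(W_2(\mu,\hm)\big)$ with $\alpha$ positive away from $0$ and unbounded, and the existence of such an $\alpha$ is \emph{equivalent} to the conjunction of $\infout(R)>0$ and $\infout(R)\to\infty$ — i.e., you assume items 2 and 4 rather than prove them. Moreover, no clause of the paper's definition of $\ucf$ supplies this: $\ucf$ demands only uniform continuity on bounded sets and pointwise positive-definiteness; the ``coercivity/lower-growth property of the class $\ucf$'' you invoke is not there (bounded level sets appear only in Definition~\ref{dfn:clp} as a property of a CLP, not of $\ucf$ — and even item 4 fails inside bare $\ucf$: $\phi(\mu)=W_2(\mu,\hm)/\big(1+W_2(\mu,\hm)\big)$ lies in $\ucf$ yet $\infout(R)\le 1$). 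Note also that the paper itself offers no proof (it declares the relations ``easy to prove''), so there is nothing to compare against; judged on its own, your treatment of items 1 and 5, the monotonicity remarks, $\supin(R)\to0$, $\infout(R)\to0$, and both $\maxrad$-limits (modulo item 2 and the first half of item 4) is correct, and the translation argument for item 1 is clean.

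The deeper problem is that the step would fail even after importing the CLP level-set condition, so it cannot be repaired by bookkeeping. Sketch: in $\pp_2(\rr)$ with $\hm=\delta_0$, the measures $\mu_n=(1-n^{-2})\delta_0+n^{-2}\delta_n$ satisfy $W_2(\mu_n,\hm)=1$ and have no $W_2$-accumulation point (their weak limit $\delta_0$ is at $W_2$-distance $1$); then $g(\mu)\triangleq\inf_n\big[n^{-1}+W_2(\mu,\mu_n)\big]$ is $1$-Lipschitz, strictly positive, with $g(\mu_n)\le n^{-1}$, and
\[
\phi(\mu)\triangleq\min\Big\{\,W_2(\mu,\hm),\ g(\mu)+\max\big(W_2(\mu,\hm)-2,\,0\big)\Big\}
\]
is Lipschitz (hence in $\ucf$), positive-definite at $\hm$, and has bounded level sets, yet $\phi(\mu_n)\to0$ on the sphere of radius $1$, forcing $\infout(R)=0$, $G_R=\{\hm\}$ and $\maxrad(R)=0$ for all $R\le1$. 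So item 2 is genuinely false under the stated hypotheses: your diagnosis that non-total-boundedness of $W_2$-balls is the obstruction is exactly right, but the conclusion is that the proposition implicitly requires a \emph{strengthened} positive-definiteness, $\inf\{\phi(\mu):W_2(\mu,\hm)\ge R\}>0$ for every $R>0$ (automatic in $\rd$ by compactness of annuli, not in $\pp_2(\rd)$), which must be added as a hypothesis rather than derived. Incidentally, the divergence half of item 4 does follow cleanly from bounded level sets alone — if $\{\phi\le c\}\subseteq\ball_M(\hm)$ then $\infout(R)\ge c$ for $R>M$ — a simpler and gap-free route than your $\alpha$-bound, once that CLP property is explicitly assumed.
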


For fixed $\hm \in \pp_2(\rd)$, $\phi \in \ucf$, $R > 0$, $\vk \in (0,1]$, and $\varepsilon>0$  we define:
\begin{itemize}
\item $\ds \bbr \triangleq \vk \sqrt{2\supin(R) + \varepsilon^2 \vk^2} - \varepsilon\vk^2$;
\item $\ds \bbrM \triangleq R + \bbra{1}{\varepsilon} + \varepsilon\vk^2 = R + \sqrt{2\supin(R) + \varepsilon^2}$;
\item $\ds \bbrh \triangleq \varepsilon\sqrt{2\vk^2 \cdot \phivk(m) } + \varepsilon \cdot \bbr$;
\item $\ds \modcon(\delta) \triangleq \sup \Big\{ |\phi(\nu_1) - \phi(\nu_2)| \ \big|\ W_2(\nu_1,\nu_2) \leq \delta, \ \nu_1,\nu_2 \in \ball_{\bbrM}(\hm) \Big\}$;
\item $\ds \bbrc \triangleq \varepsilon\vk^2 + \vk \sqrt{\varepsilon^2 \vk^2 + 2\modcon(\bbr)}$.
\end{itemize}

Below we provide some estimates for the measure $\muvk$.

\begin{lmm} \label{lmm:w2.m.muvk} 
For $\vk \in (0,1]$, $\varepsilon > 0$, and $m \in \ball_R(\hm)$, the following inequality holds:
\[
W_2\big(m,\muvk\big)
\leq \bbr.
\]
\end{lmm}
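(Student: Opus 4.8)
The plan is to derive the bound directly from the first inequality in Ekeland's variational principle (Remark~\ref{rmk:ekeland}), converting it into a scalar quadratic inequality in the single unknown $W_2(m,\muvk)$. The two facts that make this work are that $\phi$ is nonnegative---indeed, since $\phi \in \ucf$ its range lies in $\rrpo$---and that the hypothesis $m \in \ball_R(\hm)$ forces $\phi(m) \le \supin(R)$ by the very definition of $\supin$.

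First I would abbreviate $w \triangleq W_2(m,\muvk)$ and apply inequality~(1) of Remark~\ref{rmk:ekeland}, which reads
\[
\phi\big(\muvk\big) + \frac1{2\vk^2} w^2 \le \phi(m) - \varepsilon w.
\]
Dropping the nonnegative term $\phi(\muvk)$ on the left and replacing $\phi(m)$ by its upper bound $\supin(R)$ on the right, I obtain $\frac1{2\vk^2} w^2 + \varepsilon w \le \supin(R)$. Multiplying through by $2\vk^2$ gives the quadratic inequality
\[
w^2 + 2\varepsilon\vk^2 w - 2\vk^2 \supin(R) \le 0.
\]

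Since $w \ge 0$, the last step is simply to read off the admissible range of this quadratic: the larger root of $w^2 + 2\varepsilon\vk^2 w - 2\vk^2\supin(R) = 0$ is $-\varepsilon\vk^2 + \sqrt{\varepsilon^2\vk^4 + 2\vk^2\supin(R)}$, and pulling a factor $\vk$ out of the square root rewrites this expression exactly as $\bbr = \vk\sqrt{2\supin(R)+\varepsilon^2\vk^2} - \varepsilon\vk^2$. Hence $w \le \bbr$, as claimed. There is no serious obstacle here; the only points requiring care are confirming that the Ekeland term $\phi(\muvk)$ may indeed be discarded---which relies on the positivity built into the class $\ucf$---and keeping the algebra of the discriminant and the extraction of the factor $\vk$ consistent so that the resulting expression matches the definition of $\bbr$ verbatim.
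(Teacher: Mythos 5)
Your proof is correct and follows exactly the paper's own argument: both apply the first Ekeland inequality from Remark~\ref{rmk:ekeland}, drop the nonnegative term $\phi\big(\muvk\big)$, bound $\phi(m)$ by $\supin(R)$, and solve the resulting quadratic inequality in $W_2\big(m,\muvk\big)$. Your version merely makes explicit the root computation that the paper summarizes as ``solving this inequality yields,'' and the algebra checks out.
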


\begin{proof}
By the first inequality of Remark~\ref{rmk:ekeland}:
\[
\phi\big(\muvk\big) + \frac1{2\vk^2} W_2^2\big(m,\muvk\big)
\leq \phi(m) - \varepsilon W_2\big(m,\muvk\big).
\]
Thus,
\begin{equation} \label{eq:lmm.w2.m.muvk}
\frac1{2\vk^2} W_2^2\big(m,\muvk\big) + \varepsilon W_2\big(m,\muvk\big)
\leq \phi(m) - \phi\big(\muvk\big)
\leq \phi(m)
\leq \supin(R).
\end{equation}
Solving this inequality yields
\[
W_2\big(m,\muvk\big)
\leq \vk \sqrt{2\supin(R) + \varepsilon^2 \vk^2} - \varepsilon\vk^2.
\]
\end{proof}

Using inequality~\eqref{eq:lmm.w2.m.muvk} and the fact that $\bbr \leq \bbrM$, it is easy to prove the corollarial inequalities.

\begin{crl} \label{crl:w2.muvk.m.hm}
In assumptions of Lemma~\ref{lmm:w2.m.muvk}:
\begin{enumerate}
\item $\ds W_2\big(m,\muvk\big) \leq \bbrc$,
\item $\ds W_2\big(\hm,\muvk\big) \leq \bbrM$.
\end{enumerate}
\end{crl}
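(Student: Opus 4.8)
The plan is to prove the two inequalities in the stated order but with part~2 treated as a prerequisite for part~1, since the second assertion is precisely what places $\muvk$ inside the ball $\ball_{\bbrM}(\hm)$ on which the modulus of continuity $\modcon$ is meaningful. In both parts the starting point is Lemma~\ref{lmm:w2.m.muvk} and the sharper estimate \eqref{eq:lmm.w2.m.muvk} behind it.

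For part~2 I would simply invoke the triangle inequality $W_2(\hm,\muvk) \leq W_2(\hm,m) + W_2(m,\muvk)$. The first summand is at most $R$ because $m \in \ball_R(\hm)$, and the second is at most $\bbr$ by Lemma~\ref{lmm:w2.m.muvk}. It then remains to check $R + \bbr \leq \bbrM$, i.e. $\bbr \leq \sqrt{2\supin(R)+\varepsilon^2}$; this holds because $\vk \in (0,1]$ forces $\vk\sqrt{2\supin(R)+\varepsilon^2\vk^2} \leq \sqrt{2\supin(R)+\varepsilon^2}$ while the subtracted term $-\varepsilon\vk^2$ is non-positive. This is exactly the comparison $\bbr \leq \bbrM$ already flagged in the text.

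For part~1 I would return to \eqref{eq:lmm.w2.m.muvk}, namely $\frac1{2\vk^2}W_2^2(m,\muvk) + \varepsilon W_2(m,\muvk) \leq \phi(m) - \phi(\muvk)$. The new idea, compared with Lemma~\ref{lmm:w2.m.muvk} where the right-hand side was crudely bounded by $\supin(R)$, is to control $\phi(m)-\phi(\muvk)$ through the modulus of continuity. Both $m$ and $\muvk$ lie in $\ball_{\bbrM}(\hm)$ (the former because $R \leq \bbrM$, the latter by part~2), and $W_2(m,\muvk) \leq \bbr$ by Lemma~\ref{lmm:w2.m.muvk}; since $\modcon$ is nondecreasing in its argument, this gives $\phi(m)-\phi(\muvk) \leq \modcon(W_2(m,\muvk)) \leq \modcon(\bbr)$. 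Substituting and treating the result as a quadratic inequality in $w = W_2(m,\muvk)$, namely $w^2 + 2\varepsilon\vk^2 w - 2\vk^2\modcon(\bbr) \leq 0$, and retaining its positive root yields $w \leq -\varepsilon\vk^2 + \vk\sqrt{\varepsilon^2\vk^2 + 2\modcon(\bbr)}$, which is dominated by $\bbrc = \varepsilon\vk^2 + \vk\sqrt{\varepsilon^2\vk^2 + 2\modcon(\bbr)}$.

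The only genuinely delicate point is the eligibility of the modulus-of-continuity estimate: one must know that both $m$ and $\muvk$ sit inside the fixed ball $\ball_{\bbrM}(\hm)$ before invoking $\modcon$, which is exactly why part~2 has to be established first; since $\phi \in \ucf$ is uniformly continuous on that bounded set, $\modcon(\bbr)$ is finite and the quadratic step is legitimate. Everything else — the triangle inequality, the comparison $\bbr \leq \bbrM$, and solving the quadratic — is routine, and the slack between the sharp root and $\bbrc$ comes only from replacing $-\varepsilon\vk^2$ by $+\varepsilon\vk^2$.
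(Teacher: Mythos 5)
Your proposal is correct and is precisely the argument the paper leaves to the reader: part 2 follows from the triangle inequality together with $R + \bbr \leq \bbrM$ (the sharpened form of the paper's stated fact $\bbr \leq \bbrM$), and part 1 follows by inserting the bound $\phi(m) - \phi\big(\muvk\big) \leq \modcon\big(\bbr\big)$ — legitimate only once part 2 places $\muvk$ in $\ball_{\bbrM}(\hm)$ — into inequality~\eqref{eq:lmm.w2.m.muvk} and solving the resulting quadratic in $W_2\big(m,\muvk\big)$. You also correctly flagged the one genuine subtlety, namely that part 2 must be proved before part 1 so that the modulus of continuity $\modcon$ applies.
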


Now, we prove inequalities for function $\phi$ and its inf-convolution.

\begin{lmm} \label{lmm:min.seq}
For $\vk \in (0,1]$, $\varepsilon > 0$, $m \in \ball_R(\hm)$, and $\phi \in \ucf$, the following inequality holds true:
\[
\phi\big(\muvk\big) + \frac1{2\vk^2} W_2^2\big(m,\muvk\big)
\leq \phivk(m) + \bbrh.
\]
\end{lmm}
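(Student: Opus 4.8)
The plan is to invoke the second Ekeland inequality from Remark~\ref{rmk:ekeland} and then pass to a minimizing sequence for the inf-convolution. For every $\mu \in \pp_2(\rd)$ that inequality gives
\[
\phi\big(\muvk\big) + \frac1{2\vk^2} W_2^2\big(m,\muvk\big)
\leq \phi(\mu) + \frac1{2\vk^2} W_2^2(m,\mu) + \varepsilon W_2\big(\muvk,\mu\big),
\]
so the left-hand side --- which is exactly the quantity to be estimated --- is controlled by a near-optimal value of the inf-convolution plus a penalty $\varepsilon W_2(\muvk,\mu)$. The whole task reduces to bounding this penalty uniformly along a sequence $\mu$ approaching the infimum that defines $\phivk(m)$.

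First I would fix a minimizing sequence $(\mu_n)$ with $\phi(\mu_n) + \frac1{2\vk^2}W_2^2(m,\mu_n) \to \phivk(m)$ and split the penalty by the triangle inequality, $W_2(\muvk,\mu_n) \leq W_2(\muvk,m) + W_2(m,\mu_n)$. The first summand is handled directly by Lemma~\ref{lmm:w2.m.muvk}, which yields $W_2(m,\muvk) \leq \bbr$. For the second summand I would use that every $\phi \in \ucf$ is non-negative: discarding the term $\phi(\mu_n) \geq 0$ from the minimizing relation gives $\frac1{2\vk^2}W_2^2(m,\mu_n) \leq \phivk(m) + \eta_n$ with $\eta_n \to 0$, whence $W_2(m,\mu_n) \leq \sqrt{2\vk^2(\phivk(m)+\eta_n)}$ (here $\phivk(m) \geq 0$, so the root is well defined).

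Combining these two bounds in the Ekeland inequality applied at $\mu = \mu_n$ produces
\[
\phi\big(\muvk\big) + \frac1{2\vk^2} W_2^2\big(m,\muvk\big)
\leq \phivk(m) + \eta_n + \varepsilon\bbr + \varepsilon\sqrt{2\vk^2(\phivk(m)+\eta_n)},
\]
and letting $n \to \infty$, so that $\eta_n \to 0$, the right-hand side converges to $\phivk(m) + \varepsilon\bbr + \varepsilon\sqrt{2\vk^2\,\phivk(m)}$, which is precisely $\phivk(m) + \bbrh$ by the definition of $\bbrh$.

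The only genuine subtlety --- and the reason the statement is formulated with the pseudo-minimizer $\muvk$ rather than with a true minimizer --- is that in the infinite-dimensional Wasserstein setting no exact minimizer of the inf-convolution need exist, so one cannot simply substitute the optimal $\mu$ and make the penalty term $\varepsilon W_2(\muvk,\mu)$ vanish. The limiting argument along $(\mu_n)$ circumvents this, and the non-negativity of $\phi$ is exactly what converts the minimizing relation into the clean estimate $W_2(m,\mu_n) \leq \sqrt{2\vk^2(\phivk(m)+\eta_n)}$ that is needed to reproduce the $\bbrh$ term; everything else is routine triangle-inequality bookkeeping together with Lemma~\ref{lmm:w2.m.muvk}.
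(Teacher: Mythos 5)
Your proof is correct and follows essentially the same route as the paper: the second Ekeland inequality applied along a minimizing sequence, the triangle-inequality split of the penalty $\varepsilon W_2\big(\muvk,\mu_n\big)$, Lemma~\ref{lmm:w2.m.muvk} for the $\varepsilon\bbr$ piece, non-negativity of $\phi$ for the $\varepsilon\sqrt{2\vk^2\,\phivk(m)}$ piece, and a passage to the limit. The only cosmetic difference is that you discard $\phi(\mu_n)\geq 0$ before taking the square root, while the paper keeps it inside the root and lets the limit do the same work.
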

\begin{proof}
Due to the very definition of the inf-convolution, there exists a minimizing sequence $\{\mu_n\}_{n=1}^\infty$, i.e.
\[
\phi(\mu_n) + \frac1{2\vk^2} W_2^2(m,\mu_n) \to \phivk(m) \quad \text{ as } n \to \infty.
\]
By the second inequality of Remark~\ref{rmk:ekeland} and the triangle inequality, we have that
\begin{align*}
&\phi\big(\muvk\big) + \frac1{2\vk^2} W_2^2\big(m,\muvk\big) \\
&\hspace{2cm}\leq \phi(\mu_n) + \frac1{2\vk^2} W_2^2(m,\mu_n) + \varepsilon \cdot W_2(\muvk,\mu_n) \\
&\hspace{2cm}\leq \phi(\mu_n) + \frac1{2\vk^2} W_2^2(m,\mu_n) + \varepsilon \cdot W_2(m,\mu_n) + \varepsilon \cdot W_2\big(m,\muvk\big).
\end{align*}
Notice, that
\[
\varepsilon W_2(m,\mu_n)
= \varepsilon\sqrt{2\vk^2} \cdot \sqrt{\frac1{2\vk^2} \cdot W_2^2(m,\mu_n)}
\leq \varepsilon\sqrt{2\vk^2} \cdot \sqrt{ \phi(\mu_n) + \frac1{2\vk^2} \cdot W_2^2(m,\mu_n)}
\]
Then, using both previous inequalities and Lemma~\ref{lmm:w2.m.muvk}, we arrive at the estimate
\begin{multline*}
\phi\big(\muvk\big) + \frac1{2\vk^2} W_2^2\big(m,\muvk\big) \\
\leq \phi(\mu_n) + \frac1{2\vk^2} W_2^2(m,\mu_n) + \varepsilon\sqrt{2\vk^2} \cdot \sqrt{ \phi(\mu_n) + \frac1{2\vk^2} \cdot W_2^2(m,\mu_n)} + \varepsilon \cdot \bbr.
\end{multline*}
Passing to the limit when $n \to \infty$, we obtain
\[
\phi\big(\muvk\big) + \frac1{2\vk^2} W_2^2\big(m,\muvk\big)
\leq \phivk(m) + \bbrh.
\]
\end{proof}

\begin{lmm} \label{lmm:phi.lesser.phivk} 
In assumptions of Lemma~\ref{lmm:min.seq}:
\[
\phi(m) - \phivk(m)
\leq \modcon\left(\bbr\right) + \bbrh.
\]
\end{lmm}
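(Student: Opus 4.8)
The plan is to decompose the difference $\phi(m) - \phivk(m)$ by inserting the pseudo-minimizer $\muvk$ and to control the two resulting terms separately: one by the modulus of continuity of $\phi$, the other by the estimate already established in Lemma~\ref{lmm:min.seq}. Concretely, I would start from
\[
\phi(m) - \phivk(m) = \big(\phi(m) - \phi(\muvk)\big) + \big(\phi(\muvk) - \phivk(m)\big)
\]
and bound each bracket in turn.

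For the second bracket the work is already done. Lemma~\ref{lmm:min.seq} gives $\phi(\muvk) + \frac1{2\vk^2} W_2^2(m,\muvk) \leq \phivk(m) + \bbrh$; since the quadratic term is nonnegative, dropping it yields immediately $\phi(\muvk) - \phivk(m) \leq \bbrh$. This is the only step in which the Ekeland pseudo-minimality enters.

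For the first bracket I would invoke the uniform continuity of $\phi$ on bounded sets, encoded by $\modcon$. The point to verify is that both $m$ and $\muvk$ lie in the ball $\ball_{\bbrM}(\hm)$ on which $\modcon$ is defined: the inclusion $m \in \ball_R(\hm) \subseteq \ball_{\bbrM}(\hm)$ holds because $R \leq \bbrM$, while $\muvk \in \ball_{\bbrM}(\hm)$ is exactly item~2 of Corollary~\ref{crl:w2.muvk.m.hm}. Together with the distance bound $W_2(m,\muvk) \leq \bbr$ from Lemma~\ref{lmm:w2.m.muvk}, the definition of $\modcon$ as a supremum of $|\phi(\nu_1)-\phi(\nu_2)|$ over such pairs gives $\phi(m) - \phi(\muvk) \leq |\phi(m) - \phi(\muvk)| \leq \modcon(\bbr)$.

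Adding the two bounds produces $\phi(m) - \phivk(m) \leq \modcon(\bbr) + \bbrh$, which is the assertion. I do not expect any real obstacle here: the argument is essentially bookkeeping, and the only place demanding attention is checking that the pair $(m,\muvk)$ falls within the domain $\ball_{\bbrM}(\hm)$ and the distance range $\bbr$ over which the modulus $\modcon$ is taken. Both requirements are supplied directly by Lemma~\ref{lmm:w2.m.muvk} and Corollary~\ref{crl:w2.muvk.m.hm}, so no additional estimate is needed.
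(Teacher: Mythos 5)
Your proposal is correct and follows essentially the same route as the paper: the same decomposition through $\phi\big(\muvk\big)$, with Lemma~\ref{lmm:min.seq} (quadratic term dropped) bounding $\phi\big(\muvk\big) - \phivk(m) \leq \bbrh$ and Lemma~\ref{lmm:w2.m.muvk} together with $\modcon$ bounding $\phi(m) - \phi\big(\muvk\big) \leq \modcon\left(\bbr\right)$. Your explicit verification that both $m$ and $\muvk$ lie in $\ball_{\bbrM}(\hm)$ is a point the paper leaves implicit, and is a welcome addition.
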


\begin{proof}
Due to Lemma~\ref{lmm:min.seq}, we have that
\[
\phi\big(\muvk\big)
\leq \phi\big(\muvk\big) + \frac1{2\vk^2} W_2^2\big(m,\muvk\big)
\leq \phivk(m) + \bbrh
\]
Using this inequality and Lemma~\ref{lmm:w2.m.muvk}, we obtain
\[
\phi(m) - \phivk(m)
\leq \phi(m) - \phi\big(\muvk\big) + \bbrh
\leq \modcon\left(\bbr\right) + \bbrh.
\]
\end{proof}

We complete this section with the fact that the set $G_R^\vk$ is bounded by the radius $R$.

\begin{lmm} \label{lmm:grvk.in.oall} 
For every $\vk>0$, $\varepsilon>0$, and $R > 0$ satisfying
\begin{equation} \label{eq:lmm.grvk.in.oall}
\modcon\left(\bbr\right) + \bbrh
< \frac12 \infout(R),
\end{equation}
the following inclusion holds:
\[
G_R^\vk
\subseteq \oall_R(\hm).
\]
\end{lmm}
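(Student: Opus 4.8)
The plan is to argue by contraposition: I will show that every $\mu \notin \oall_R(\hm)$ satisfies $\phivk(\mu) > \tfrac12 \infout(R)$, which is exactly the negation of the defining inequality of $G_R^\vk$. First I record that the hypothesis $\modcon(\bbr) + \bbrh < \tfrac12\infout(R)$ forces $\infout(R) > 0$ (its left-hand side being nonnegative), so the strict inequality $\tfrac12\infout(R) < \infout(R)$ is available throughout. Fix $\mu$ with $W_2(\mu,\hm) \ge R$; by the definition of $\infout$ this already gives $\phi(\mu) \ge \infout(R)$. Suppose, for contradiction, that $\phivk(\mu) \le \tfrac12\infout(R)$.

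Next I would pass to a minimizing sequence $\{\mu_n\}$ for the inf-convolution at $\mu$, so that $\phi(\mu_n) + \frac{1}{2\vk^2} W_2^2(\mu,\mu_n) \le \phivk(\mu) + \eta_n$ with $\eta_n \downarrow 0$. Two facts are read off the bracket. Since $\phi \ge 0$, the potential term gives $\phi(\mu_n) \le \tfrac12\infout(R) + \eta_n$, which is $< \infout(R)$ for large $n$; by the definition of $\infout(R)$ this forces $\mu_n \in \oall_R(\hm)$, in particular $\mu_n \in \ball_R(\hm) \subseteq \ball_{\bbrM}(\hm)$. The quadratic term gives $W_2(\mu,\mu_n) \le \vk\sqrt{\infout(R) + 2\eta_n}$, and together with $W_2(\mu_n,\hm) < R$ this keeps $\mu$ inside $\ball_{\bbrM}(\hm)$ as well, using $\vk \le 1$ and $\bbr \le \sqrt{2\supin(R)+\varepsilon^2}$.

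The heart of the argument is then a dichotomy on the distance $W_2(\mu,\mu_n)$. If $W_2(\mu,\mu_n) \le \bbr$, then both points lie in $\ball_{\bbrM}(\hm)$, so by the definition of the modulus $\modcon$ and its monotonicity, $\phi(\mu_n) \ge \phi(\mu) - \modcon(\bbr) \ge \infout(R) - \modcon(\bbr)$; combined with $\phi(\mu_n) \le \tfrac12\infout(R)+\eta_n$ and the hypothesis $\modcon(\bbr) < \tfrac12\infout(R)$, this is a contradiction for small $\eta_n$. If instead $W_2(\mu,\mu_n) > \bbr$, the quadratic term alone yields $\phivk(\mu) + \eta_n \ge \frac{\bbr^2}{2\vk^2}$, so it suffices to verify $\frac{\bbr^2}{2\vk^2} > \tfrac12\infout(R)$, i.e. $\bbr > \vk\sqrt{\infout(R)}$, to again contradict $\phivk(\mu)\le\tfrac12\infout(R)$. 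Either way the standing assumption is impossible, and the inclusion $G_R^\vk \subseteq \oall_R(\hm)$ follows.

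I expect the main obstacle to be precisely the constant bookkeeping in this dichotomy: showing that, in the regime cut out by the hypothesis, the penalty radius $\bbr$ dominates $\vk\sqrt{\infout(R)}$ so the far branch closes, while on the near branch $\modcon(\bbr)$ stays below $\tfrac12\infout(R)$. Checking that these two thresholds are compatible, and that the memberships $\mu,\mu_n \in \ball_{\bbrM}(\hm)$ required to invoke $\modcon$ genuinely hold, is where the explicit formulas for $\bbr$, $\bbrM$, and $\bbrh$, together with the relation $\infout(R) \le \supin(R)$, must be deployed carefully.
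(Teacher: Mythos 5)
Your proposal is correct, but it takes a genuinely different route from the paper. The paper's proof is a two-line application of Lemma~\ref{lmm:phi.lesser.phivk}: for $m \in G_R^\vk$ it writes $\phi(m) \leq \phivk(m) + \modcon\left(\bbr\right) + \bbrh < \frac12\infout(R) + \frac12\infout(R) = \infout(R)$ and concludes $m \in \oall_R(\hm)$, so all the work is delegated to the Ekeland pseudo-minimizer machinery of Section~\ref{sec:inf}. You instead argue the contrapositive directly from the definition~\eqref{eq:inf}, using only a minimizing sequence and a near/far dichotomy on $W_2(\mu,\mu_n)$, with no appeal to Ekeland's principle at all. The one step you flagged as open does close, by exactly the bookkeeping you anticipated: since $\bbr/\vk + \varepsilon\vk = \sqrt{2\supin(R) + \varepsilon^2\vk^2}$, squaring gives $\bbr^2/\vk^2 = 2\supin(R) - 2\varepsilon\bbr$, and hypothesis~\eqref{eq:lmm.grvk.in.oall} together with $\bbrh \geq \varepsilon\bbr$ and $\supin(R) \geq \infout(R)$ yields $\bbr^2/\vk^2 > 2\supin(R) - \infout(R) \geq \infout(R)$, i.e.\ $\bbr > \vk\sqrt{\infout(R)}$; in fact this shows your far branch is eventually vacuous, since the quadratic-term bound $W_2(\mu,\mu_n) \leq \vk\sqrt{\infout(R) + 2\eta_n}$ falls below $\bbr$ for large $n$, so the dichotomy collapses into the near branch, which your margin $\frac12\infout(R) - \modcon\left(\bbr\right) > 0$ kills. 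Two remarks on what each approach buys. Your route is self-contained and, notably, sidesteps a hypothesis mismatch in the paper's own argument: Lemma~\ref{lmm:phi.lesser.phivk} (via Lemmas~\ref{lmm:w2.m.muvk} and~\ref{lmm:min.seq}) is stated for $m \in \ball_R(\hm)$, whereas a point of $G_R^\vk$ is not known in advance to lie in that ball --- that is essentially the conclusion being proved --- so the paper's proof applies the lemma outside its stated hypotheses; your argument instead \emph{derives} the needed memberships ($\mu_n \in \oall_R(\hm)$ from the potential term, $\mu \in \ball_{\bbrM}(\hm)$ from the triangle inequality with $\vk \leq 1$) rather than assuming them. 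Moreover, you use only the $m$-independent part $\varepsilon\bbr$ of $\bbrh$, so your hypothesis is slightly weaker. What the paper's route buys is brevity and economy: it reuses lemmas that are needed later anyway (e.g.\ in Lemma~\ref{lmm:mt.in.ball}), at the cost of the circularity just noted. One cosmetic point: the lemma says ``for every $\vk > 0$,'' but both your proof and the paper's standing conventions require $\vk \in (0,1]$ (you use it when checking $\bbr + R \leq \bbrM$); it is worth stating that restriction explicitly.
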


\begin{proof}
For $m \in G_R^\vk$, by the definition of $G_R^\vk$, we have
\[
\phivk(m)
\leq \frac12 \infout(R).
\]
This inequality, Lemma~\ref{lmm:phi.lesser.phivk} and the condition of the lemma give that
\[
\phi(m)
\leq \phivk(m) + \modcon\left(\bbr\right) + \bbrh
< \infout(R).
\]
Hence, $m \in \oall_R(\hm)$.
\end{proof}
\section{Proximal subgradient} \label{sec:sub}
As mentioned earlier, the Wasserstein space lacks linear structure, making the definition of a direction non-trivial. Following~\cite{Ambrosio_2005}, we represent directions as distributions on the tangent bundle, with subdifferential elements residing in the space of probability measures over the cotangent bundle to $\rd$.

Let $\hm \in \pp_2(\rd)$ and $\varepsilon > 0$.

\begin{dfn}
A measure $\alpha \in \pp_2(\rd \times \rds)$ is a \dfnem{proximal $\varepsilon$-subgradient} of the function $\phi: \pp_2(\rd) \to \rr$ at the point $m \in \pp_2(\rd)$ if there exist $\sigma>0$ and $r>0$ such that, for all $\mu \in \ball_r(m)$ and every $\beta \in \pp_2(\rd \times \rd \times \rds)$ satisfying
\begin{enumerate}
\item $\ds \proj^{1,2}\sharp\beta \in \Pi(m,\mu)$,
\item $\ds \proj^{1,3}\sharp\beta = \alpha$,
\end{enumerate}
the following inequality holds:
\begin{multline*}
\phi(\mu) \geq \phi(m)
+ \int_{(\rd)^3} p \cdot (x_2 - x_1)\ \beta(dx_1dx_2dp) \\
- \sigma \int_{(\rd)^2} \| x_2 - x_1 \|^2\ (\proj^{1,2}\sharp\beta)(dx_1dx_2)
- \varepsilon \cdot W_2(m,\mu).
\end{multline*}

The set of proximal $\varepsilon$-subgradients is called a \dfnem{proximal $\varepsilon$-subdifferential} and denoted by $\psd \phi(m)$.
\end{dfn}

Define a map $\chi_\vk:\ \rd \times \rd \to \rd \times \rds$ and a measure $\gvk \in \pp_2(\rd \times \rds)$ by the rules:
\begin{equation} \label{eq:gvk}
\chi_\vk(z_1,z_2) \triangleq \left(z_2,\ \left(\frac{z_1-z_2}{\vk^2}\right)^\tran \right);
\quad \gvk \triangleq \chi_\vk \sharp \pivk,
\end{equation}
where $\pivk \in \Pi_o\big(m,\muvk\big)$.

\begin{lmm} \label{lmm:gamma.subgrad} 
The measure $\gvk$ is a proximal $\varepsilon$-subgradient of the function $\phi$ at the point $\muvk$.
\end{lmm}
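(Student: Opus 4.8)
The plan is to verify the defining inequality of a proximal $\varepsilon$-subgradient directly, with the explicit constant $\sigma = \tfrac1{2\vk^2}$ and with $r$ arbitrary (the inequality will in fact hold for \emph{all} $\mu \in \pp_2(\rd)$). The starting point is the second inequality of Remark~\ref{rmk:ekeland}, which expresses that $\muvk$ is a pseudo-minimizer of $\mu \mapsto \phi(\mu) + \tfrac1{2\vk^2}W_2^2(m,\mu)$. After rearranging it reads, for every $\mu \in \pp_2(\rd)$,
\[
\phi(\mu) \geq \phi\big(\muvk\big) + \frac1{2\vk^2}\Big[ W_2^2\big(m,\muvk\big) - W_2^2(m,\mu)\Big] - \varepsilon\, W_2\big(\muvk,\mu\big).
\]
Everything then reduces to bounding the bracketed difference of squared Wasserstein distances from below by the linear-plus-quadratic expression occurring in the subgradient definition.

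The key observation is that the map $\chi_\vk$ from~\eqref{eq:gvk} is invertible for $\vk>0$: a point $(x_1,p)$ in the support of $\gvk = \chi_\vk\sharp\pivk$ encodes $z_2 = x_1$ and $z_1 = x_1 + \vk^2 p^\tran$. Hence, given any admissible $\beta$ (so that $\proj^{1,2}\sharp\beta \in \Pi(\muvk,\mu)$ and $\proj^{1,3}\sharp\beta = \gvk$), I would push $\beta$ forward by the map $\Phi:(x_1,x_2,p)\mapsto(x_1 + \vk^2 p^\tran,\,x_2)$. The resulting measure $\Phi\sharp\beta$ on $\rd\times\rd$ has second marginal $\mu$, and its first marginal is the law of $z_1 = x_1 + \vk^2 p^\tran$; since $\proj^{1,3}\sharp\beta = \gvk$ forces the reconstructed $(z_1,z_2)$-law to coincide with $\pivk \in \Pi_o(m,\muvk)$, that first marginal is exactly $m$. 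Thus $\Phi\sharp\beta \in \Pi(m,\mu)$, and by the definition of $W_2$,
\[
W_2^2(m,\mu) \leq \int_{(\rd)^3} \big\| x_1 + \vk^2 p^\tran - x_2 \big\|^2\ \beta(dx_1dx_2dp).
\]

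Next I would expand the integrand as $\|z_1-z_2\|^2 + 2\langle z_1-z_2,\,z_2-x_2\rangle + \|z_2-x_2\|^2$ with $z_1-z_2 = \vk^2 p^\tran$ and $z_2 = x_1$, which equals $\vk^4\|p\|^2 + 2\vk^2\, p\cdot(x_1-x_2) + \|x_1-x_2\|^2$. On the other hand, since $\|p\|^2$ depends only on the $\proj^{1,3}$-coordinates whose law is $\gvk = \chi_\vk\sharp\pivk$, we have $W_2^2(m,\muvk) = \int\|z_1-z_2\|^2\,\pivk = \int \vk^4\|p\|^2\,\beta$. Subtracting, the $\vk^4\|p\|^2$ terms cancel and
\[
\frac1{2\vk^2}\Big[ W_2^2\big(m,\muvk\big) - W_2^2(m,\mu)\Big] \geq \int_{(\rd)^3} p\cdot(x_2-x_1)\ \beta - \frac1{2\vk^2}\int_{(\rd)^2}\|x_2-x_1\|^2\ (\proj^{1,2}\sharp\beta).
\]
Combining this with the rearranged Ekeland inequality yields precisely the defining estimate of a proximal $\varepsilon$-subgradient at $\muvk$, with $\sigma = \tfrac1{2\vk^2} > 0$ and any $r>0$.

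The main obstacle is making the pushforward step rigorous, specifically verifying that $\Phi\sharp\beta$ genuinely has first marginal $m$. This is the only place where the precise structure of $\gvk$ is used: the constraint $\proj^{1,3}\sharp\beta = \chi_\vk\sharp\pivk$ together with the invertibility of $\chi_\vk$ is what pins the reconstructed transport law down to $\pivk$ and hence forces its $z_1$-marginal to be $m$. Once this identification is established, the remaining steps are the routine algebraic manipulation of the squared norms sketched above, and the selection of the constants $\sigma$ and $r$ is immediate.
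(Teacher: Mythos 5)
Your proof is correct and follows essentially the same route as the paper: your pushforward map $\Phi:(x_1,x_2,p)\mapsto(x_1+\vk^2 p^\tran,\,x_2)$ is precisely $\proj^{1,3}\circ\xi_\vk$ for the paper's map $\xi_\vk(z_1,z_2,z_3)=(z_1+\vk^2 z_3^\tran,\,z_1,\,z_2)$, and the identification of the first marginal with $m$ via invertibility of $\chi_\vk$, the use of the second Ekeland inequality, and the binomial expansion producing $\sigma=\tfrac1{2\vk^2}$ all match the paper's argument. The only cosmetic difference is that you compute $W_2^2\big(m,\muvk\big)=\int \vk^4\|p\|^2\,\beta$ directly from optimality of $\pivk$ instead of carrying the intermediate three-coordinate measure $\rho=\xi_\vk\sharp\beta$ through the expansion.
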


\begin{proof}
For an arbitrary measure $\mu \in \pp_2(\rd)$ and a measure $\beta \in \pp_2(\rd \times \rd \times \rds)$ with properties
\begin{enumerate}
\item $\ds \proj^{1,2}\sharp\beta \in \Pi(\muvk,\mu)$;
\item $\ds \proj^{1,3}\sharp\beta = \gvk$,
\end{enumerate}
we construct a map $\xi_\vk:\ \rd \times \rd \times \rds \to \rd \times \rd \times \rd$ and a measure $\rho \in \pp_2(\rd \times \rd \times \rd)$ by the rules:
\begin{align*}
\xi_\vk(z_1,z_2,z_3) \triangleq \left(z_1 + \vk^2 \cdot z_3^\tran,\ z_1,\ z_2 \right);
\quad \rho \triangleq \xi_\vk\sharp\beta.
\end{align*}
Notice that
\begin{itemize}
\item $\ds (\xi_\vk)^{-1}(w_1,w_2,w_3) = \left(w_2,\ w_3,\ \left(\frac{w_1-w_2}{\vk^2}\right)^\tran \right)$
\item $\ds \proj^{1,2}\sharp\rho = \pivk$;
\item $\ds \proj^{1,3}\sharp\rho \in \Pi(m,\mu)$.
\end{itemize}
Let $\pi \triangleq \proj^{1,3}\sharp\rho$.
Then, by the second inequality of Remark~\ref{rmk:ekeland},
\begin{align*}
\phi(\mu)
&\geq \phi\big(\muvk\big) + \frac1{2\vk^2} \int_{(\rd)^2} \|\yvk-x\|^2\ \big(\pivk\big)(dxd\yvk) \\
&\hspace{3.5cm}- \frac1{2\vk^2} \int_{(\rd)^2} \|y-x\|^2\ \pi(dxdy) - \varepsilon \cdot W_2(\muvk,\mu) \\
&= \phi\big(\muvk\big) + \frac1{2\vk^2} \int_{(\rd)^3} \|\yvk-x\|^2\ \rho(dx d\yvk dy) \\
&\hspace{3.5cm}- \frac1{2\vk^2} \int_{(\rd)^3} \|y-x\|^2\ \rho(dx d\yvk dy) - \varepsilon \cdot W_2(\muvk,\mu) \\
&= \phi\big(\muvk\big) + \int_{(\rd)^3} \left( \frac1{2\vk^2} \|\yvk-x\|^2
- \frac1{2\vk^2} \|y-x\|^2 \right)\ \rho(dx d\yvk dy) \\
&\hspace{10cm}- \varepsilon \cdot W_2(\muvk,\mu).
\end{align*}
Expanding and rearranging terms yields
\begin{multline*}
\phi(\mu)
\geq \phi\big(\muvk\big) + \int_{(\rd)^3} \left( \left(\frac{x-\yvk}{\vk^2}\right)^\tran \cdot (y-\yvk)
- \frac1{2\vk^2} \|y-\yvk\|^2 \right)\ \rho(dx d\yvk dy) \\
- \varepsilon \cdot W_2(\muvk,\mu).
\end{multline*}
Thus,
\begin{multline*}
\phi(\mu)
\geq \phi\big(\muvk\big) + \int_{(\rd)^3} p \cdot (y-\yvk)\ \beta(d\yvk dy dp) \\
- \frac1{2\vk^2} \int_{(\rd)^2} \| y - \yvk \|^2\ (\proj^{1,2}\sharp\beta)(d\yvk dy) - \varepsilon \cdot W_2(\muvk,\mu),
\end{multline*}
proving $\gvk \in \psd\phi\big(\muvk\big)$.
\end{proof}

The following statement can be regarded as an analog of the Taylor expantion.

\begin{lmm} \label{lmm:taylor} 
Let $m \in \ball_R(\hm)$, $b \in \ll_2(m)$, $\tau>0$ and $\mu = (\Id+\tau b)\sharp m$.
Then,
\begin{multline*}
\phivk(\mu)
\leq \phivk(m) + \int_{(\rd)^2} \left( \frac{x-\yvk}{\vk^2} \right)^\tran \cdot \tau b(x)\ \big(\pivk\big)(dxd\yvk) \\
+ \frac{\tau^2}{2\vk^2} \|b\|_{\ll_2(m)}^2
+ \bbrh
\end{multline*}
\end{lmm}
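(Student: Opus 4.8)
The plan is to bound $\phivk(\mu)$ from above by testing the infimum that defines it against a single well-chosen competitor: the pseudo-minimizer $\muvk$ associated with $m$, coupled to $\mu$ through an explicit plan built from the optimal plan $\pivk \in \Pi_o(m,\muvk)$. Concretely, since the inf-convolution is an infimum over target measures and plans, any pair $(\nu,\sigma)$ with $\sigma \in \Pi(\mu,\nu)$ yields $\phivk(\mu) \leq \phi(\nu) + \frac1{2\vk^2}\int_{(\rd)^2}\|y-x\|^2\,\sigma(dxdy)$. Taking $\nu = \muvk$ reduces the whole lemma to producing a good plan $\sigma$ between $\mu$ and $\muvk$ and then controlling $\phi(\muvk) + \frac1{2\vk^2}W_2^2(m,\muvk)$.

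For the plan I would transport only the source coordinate of $\pivk$ through the perturbation map, leaving the target fixed. That is, set $T(x,\yvk) \triangleq (x + \tau b(x),\,\yvk)$ and $\sigma \triangleq T\sharp\pivk$. Because $\pivk$ has first marginal $m$ and second marginal $\muvk$, the plan $\sigma$ lies in $\Pi(\mu,\muvk)$: its first marginal is $(\Id + \tau b)\sharp m = \mu$ and its second marginal is unchanged. Feeding $\sigma$ into the inequality above and using the push-forward formula gives
\[
\phivk(\mu)
\leq \phi\big(\muvk\big) + \frac1{2\vk^2} \int_{(\rd)^2} \|x + \tau b(x) - \yvk\|^2\ \big(\pivk\big)(dxd\yvk).
\]
Expanding the square as $\|x-\yvk\|^2 + 2\tau\,(x-\yvk)\cdot b(x) + \tau^2\|b(x)\|^2$ and integrating term by term, the zeroth-order term equals $W_2^2(m,\muvk)$ since $\pivk$ is optimal, the quadratic term integrates to $\tau^2\|b\|_{\ll_2(m)}^2$ because the first marginal of $\pivk$ is $m$, and the cross term produces exactly $2\tau\int_{(\rd)^2}(x-\yvk)\cdot b(x)\,\pivk$. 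After dividing the last two by $2\vk^2$, the cross term becomes the announced linear integral $\int_{(\rd)^2}\big(\frac{x-\yvk}{\vk^2}\big)^\tran\!\cdot \tau b(x)\,\pivk$ and the quadratic term becomes $\frac{\tau^2}{2\vk^2}\|b\|_{\ll_2(m)}^2$.

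It then remains to invoke Lemma~\ref{lmm:min.seq}, which bounds $\phi(\muvk) + \frac1{2\vk^2}W_2^2(m,\muvk)$ by $\phivk(m) + \bbrh$; substituting this and collecting terms yields precisely the claimed inequality. I expect the only genuinely delicate point to be the choice of coupling: one must perturb the source measure while holding the target $\muvk$ fixed, so that the optimal cost $W_2^2(m,\muvk)$ is recovered as the zeroth-order term and the first-order term aligns with the subgradient direction $\frac{x-\yvk}{\vk^2}$ carried by $\gvk$ in~\eqref{eq:gvk}. The remaining checks—that $\mu \in \pp_2(\rd)$, which is immediate from $m \in \pp_2(\rd)$ and $b \in \ll_2(m)$, and the two marginal identities for $\sigma$—are routine.
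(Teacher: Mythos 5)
Your proposal is correct and follows essentially the same route as the paper: the paper likewise tests the inf-convolution against the competitor $\muvk$ via the plan $\xi\sharp\pivk$ with $\xi(z_1,z_2)=\big(z_1+\tau b(z_1),\,z_2\big)$ (your map $T$), expands the square using optimality of $\pivk$ for the zeroth-order term, and closes with Lemma~\ref{lmm:min.seq}. No gaps.
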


\begin{proof}
By the definition of $\phivk$ (see~\eqref{eq:inf}), we have that
\[
\phivk(\mu)
\leq \phi\big(\muvk\big)
+ \frac1{2\vk^2} \int_{(\rd)^2} \|\yvk-y\|^2\ \pi(dyd\yvk)
\]
for all $\mu \in \pp_2(\rd)$ and $\pi \in \Pi\big(\mu,\muvk\big)$.
Define a mapping $\xi: \rd \times \rd \to \rd \times \rd$ by the rule:
\[
\xi(z_1,z_2) \triangleq \big( z_1 + \tau b(z_1),\ z_2 \big),
\]
and choose a plan $\pi \triangleq \xi\sharp\pivk$.
Expanding the squared norm, we obtain
\begin{align*}
\phivk(\mu)
&\leq \phi\big(\muvk\big)
+ \frac1{2\vk^2} \int_{(\rd)^2} \|y-\yvk\|^2\ \pi(dyd\yvk) \\
&= \phi\big(\muvk\big)
+ \frac1{2\vk^2} \int_{(\rd)^2} \|x+\tau b(x)-\yvk\|^2\ \big(\pivk\big)(dxd\yvk) \\
&= \phi\big(\muvk\big)
+ \frac1{2\vk^2} \int_{(\rd)^2} \|x-\yvk\|^2\ \big(\pivk\big)(dxd\yvk) \\
&\hspace{2cm}+ \int_{(\rd)^2} \left(\frac{x-\yvk}{\vk^2}\right)^\tran \cdot \tau b(x)\ \big(\pivk\big)(dxd\yvk)
+ \frac{\tau^2}{2\vk^2} \|b\|_{\ll_2(m)}^2.
\end{align*}
Since $\pivk$ is an optimal plan between $m$ and $\muvk$, Lemma~\ref{lmm:min.seq} implies
\begin{multline} \label{eq:lmm.taylor}
\phivk(\mu)
\leq \phivk(m) + \bbrh \\
+ \int_{(\rd)^2} \left(\frac{x-\yvk}{\vk^2}\right)^\tran \cdot \tau b(x)\ \big(\pivk\big)(dxd\yvk)
+ \frac{\tau^2}{2\vk^2} \|b\|_{\ll_2(m)}^2
\end{multline}
\end{proof}
\medskip

We conclude this section with the notion of control-Lyapunov function extending the definition from~\cite{Clarke_1997} to the Wasserstein space.

\begin{dfn}[\cite{Ledyaev_2010}] \label{dfn:clp}
A \dfnem{control-Lyapunov pair (CLP)} at the point $\hm$ for continuity equation~\eqref{eq:sys} is a pair of two continuous functions $(\phi,\psi)$, where $\phi \in \ucf$ and $\psi: \pp_2(\rd) \times \rrpo \to \rrpo$ satisfy the following assumptions:
\begin{enumerate}
\item for every $R > 0$ the level-set $\ds \{ \mu \in \pp_2(\rd)\ :\ \phi(\mu) \leq R \}$ is bounded;
\item for all $(r,R) \in \ord^2$, there exists a function $g: \rrpo \to \rrpo$ such that
\[
\inf \big\{ \psi(m,\varepsilon)\ :\ m \in \ball_R(\hm)\setminus\oall_r(\hm) \big\} \geq g(\varepsilon) > 0;
\]
\item the mapping $\varepsilon \mapsto \psi(m,\varepsilon)$ is monotone increasing;
\item there exist $\varepsilon_0 > 0$ such that, for all $\varepsilon \in (0,\varepsilon_0)$, measure $m \in \pp_2(\rd)$ and subgradient $\alpha \in \psd\phi(m)$ the following inequality holds true:
\[
\min\limits_{u \in U} \int_{\rd\times\rd} p \cdot f(x,m,u)\ \alpha(dxdp) \leq -\psi(m,\varepsilon).
\]
\end{enumerate}
The function $\phi$ is called a \dfnem{control-Lyapunov function (CLF)}.
\end{dfn}
\section{Local stabilization at neighborhood} \label{sec:local}
In this section, we fix a measure $\hm \in \pp_2(\rd)$, and a pair of numbers $(r,R) \in \ord^2$.
Also we assume that the parameter $\varepsilon$ is from $(0,1]$.
Let $(\phi,\psi)$ be a CLP at point $\hm$ for continuity equation~\eqref{eq:sys}.
Additionally, for each $m \in \pp_2(\rd)$, let $\pivk \in \Pi_o\big(m,\muvk\big)$ and
\begin{equation} \label{eq:Delta.r.r}
\dlt(r,R)
\triangleq \frac13 \inf \left\{ \psi(\mu,0)\ \Big|\ \mu \in \pp_2(\rd);\  \frac12 \maxrad(r) \leq W_2(\hm,\mu) \leq \bbrMa{1} \right\} > 0.
\end{equation}

We define the feedback strategy $\kke(m)$ by the rule:
\begin{multline} \label{eq:feedback.local}
\int_\rd \left(\frac{x-\yvk}{\vk^2}\right)^\tran \cdot f\big(x,m,\kke(m)\big)\ \big(\pivk\big)(dxd\yvk) \\
= \min\limits_{u \in U} \int_\rd \left(\frac{x-\yvk}{\vk^2}\right)^\tran \cdot f(x,m,u)\ \big(\pivk\big)(dxd\yvk).
\end{multline}
Note, that this strategy implements the extremal shift at the direction determined by $\pivk$.
This concept goes back to the seminal book by Krasovskii and Subbotin~\cite{Krasovskii_1974}.

Now, we prove that the set $G_R^\vk$ is invariant w.r.t. $\theta$-trajectories generated by the feedback $\kke$ for sufficiently small $\varepsilon$.

\begin{lmm} \label{lmm:2Delta} 
For $\vk, \varepsilon \in (0,1]$ satisfying
\begin{equation} \label{eq:lmm.2Delta}
\bbr < \frac12 \maxrad(r), \quad \frac{2C_0}{\vk^2} \cdot \big( \bbrc \big)^2 < \dlt(r,R),
\end{equation}
and for all $m \in \ball_R(\hm) \setminus \ball_{\maxrad(r)}(\hm)$, the following inequality holds:
\[
\int_\rd \left( \frac{x-\yvk}{\vk^2}\right)^\tran \cdot f\big(x,m,\kke(m)\big)\ \big(\pivk\big)(dxd\yvk)
\leq -2\dlt(r,R).
\]
\end{lmm}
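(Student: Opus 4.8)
The plan is to read the feedback integrand in \eqref{eq:feedback.local} as the pairing that appears in the control-Lyapunov inequality of Definition~\ref{dfn:clp}, but evaluated at the ``wrong'' base point: the subgradient pairing lives at $\muvk$, whereas the feedback sees the field $f(x,m,\cdot)$ at the base measure $m$ and at its own support variable $x$. Since Lemma~\ref{lmm:gamma.subgrad} tells us that $\gvk \in \psd\phi(\muvk)$, the CLP property (item~4 of Definition~\ref{dfn:clp}) is available at $\muvk$, and the whole argument reduces to transferring that inequality from $\muvk$ back to $m$ and controlling the resulting error by the two smallness hypotheses in \eqref{eq:lmm.2Delta}.

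First I would apply item~4 of Definition~\ref{dfn:clp} to the subgradient $\gvk \in \psd\phi(\muvk)$, obtaining
\[
\min_{u\in U} \int_{\rd\times\rds} p\cdot f(x',\muvk,u)\ \gvk(dx'dp) \leq -\psi(\muvk,\varepsilon).
\]
Then, using $\gvk = \chi_\vk\sharp\pivk$ together with the change-of-variables formula for push-forwards, I would rewrite the left-hand integral as an integral against the optimal plan $\pivk$, namely $\int \big(\frac{x-\yvk}{\vk^2}\big)^\tran\cdot f(\yvk,\muvk,u)\,\pivk(dxd\yvk)$. This reproduces exactly the structure of the feedback functional in \eqref{eq:feedback.local}, except that the field is evaluated at $(\yvk,\muvk)$ rather than at $(x,m)$.

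The \emph{bridging step} is where the Lipschitz hypothesis enters. For each fixed $u$ I would estimate the difference of the two integrands by Assumption~\ref{ass.lip}, $\|f(x,m,u)-f(\yvk,\muvk,u)\| \leq C_0(\|x-\yvk\| + W_2(m,\muvk))$, and apply the Cauchy--Schwarz inequality against $\pivk$. Using that $\pivk$ is optimal (so $\int\|x-\yvk\|^2\,\pivk = W_2^2(m,\muvk)$ and $\int\|x-\yvk\|\,\pivk \leq W_2(m,\muvk)$), this bounds the error by $\frac{2C_0}{\vk^2}W_2^2(m,\muvk)$, which by Corollary~\ref{crl:w2.muvk.m.hm}(1) and the second condition in \eqref{eq:lmm.2Delta} is at most $\frac{2C_0}{\vk^2}(\bbrc)^2 < \dlt(r,R)$. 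Consequently the minimum over $u$ of the feedback functional exceeds $\min_u \int \big(\frac{x-\yvk}{\vk^2}\big)^\tran f(\yvk,\muvk,u)\,\pivk$ by at most $\dlt(r,R)$, and is therefore bounded above by $-\psi(\muvk,\varepsilon)+\dlt(r,R)$.

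It remains to pin $\psi(\muvk,\varepsilon)$ down from below. Here I would locate $\muvk$ inside the annulus appearing in the definition \eqref{eq:Delta.r.r} of $\dlt(r,R)$: the outer bound $W_2(\hm,\muvk) \leq \bbrM \leq \bbrMa{1}$ comes from Corollary~\ref{crl:w2.muvk.m.hm}(2) and the monotonicity of $\bbrM$ in $\varepsilon\leq 1$, while the inner bound follows from the reverse triangle inequality $W_2(\hm,\muvk) \geq W_2(\hm,m) - W_2(m,\muvk)$ together with $W_2(\hm,m) > \maxrad(r)$ (since $m\notin\ball_{\maxrad(r)}(\hm)$), Lemma~\ref{lmm:w2.m.muvk}, and the first condition $\bbr < \frac12\maxrad(r)$, giving $W_2(\hm,\muvk) > \frac12\maxrad(r)$. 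Thus $\psi(\muvk,0) \geq 3\dlt(r,R)$ by the defining infimum, and monotonicity in $\varepsilon$ (item~3 of Definition~\ref{dfn:clp}) yields $\psi(\muvk,\varepsilon) \geq 3\dlt(r,R)$. Chaining the estimates gives the feedback functional $\leq -\psi(\muvk,\varepsilon) + \dlt(r,R) \leq -2\dlt(r,R)$, as claimed. I expect the main obstacle to be precisely this base-point mismatch: ensuring the push-forward identity correctly turns the abstract subgradient pairing at $\muvk$ into the concrete $\pivk$-integral, and then absorbing the $f(x,m,\cdot)$-versus-$f(\yvk,\muvk,\cdot)$ discrepancy entirely into the two quantitative smallness conditions in \eqref{eq:lmm.2Delta}.
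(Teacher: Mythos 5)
Your proposal is correct and follows essentially the same route as the paper's own proof: the same splitting of $f(x,m,u)$ into $f(\yvk,\muvk,u)$ plus a Lipschitz error term bounded via Cauchy--Schwarz and Corollary~\ref{crl:w2.muvk.m.hm} by $\frac{2C_0}{\vk^2}\big(\bbrc\big)^2 < \dlt(r,R)$, the same identification of the remaining term with the subgradient pairing through $\gvk = \chi_\vk\sharp\pivk$ and Lemma~\ref{lmm:gamma.subgrad}, and the same reverse-triangle-inequality argument placing $\muvk$ in the annulus of~\eqref{eq:Delta.r.r} so that $\psi\big(\muvk,\varepsilon\big) \geq 3\dlt(r,R)$. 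If anything, you are slightly more explicit than the paper on two implicit points (the outer bound $W_2\big(\hm,\muvk\big) \leq \bbrMa{1}$ via monotonicity in $\varepsilon \leq 1$, and the monotonicity step from $\psi(\cdot,0)$ to $\psi(\cdot,\varepsilon)$), so nothing is missing.
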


\begin{proof}
Due to the construction of the feedback strategy $\kke$ (see~\eqref{eq:feedback.local}), we have
\begin{equation} \label{eq:lmm.2Delta.lhs}
\begin{split}
\int_\rd &\left(\frac{x-\yvk}{\vk^2}\right)^\tran \cdot f\big(x,m,\kke(m)\big)\ \big(\pivk\big)(dxd\yvk) \\
&= \min\limits_{u \in U} \int_\rd \left(\frac{x-\yvk}{\vk^2}\right)^\tran \cdot f(x,m,u)\ \big(\pivk\big)(dxd\yvk) \\
&= \min\limits_{u \in U} \Bigg[ \int_\rd \left(\frac{x-\yvk}{\vk^2}\right)^\tran \cdot \big(f(x,m,u) - f(\yvk,\muvk,u)\big)\ \big(\pivk\big)(dxd\yvk) \\
&\hspace{4cm}+ \int_\rd \left(\frac{x-\yvk}{\vk^2}\right)^\tran \cdot f(\yvk,\muvk,u)\ \big(\pivk\big)(dxd\yvk) \Bigg]
\end{split}
\end{equation}
By Assumption~\ref{ass.lip}, the first term can be estimated by
\begin{align*}
&\int_\rd \frac{\|x-\yvk\|}{\vk^2} \cdot C_0 \cdot \big(\|x-\yvk\| + W_2\big(m,\muvk\big)\big)\ \big(\pivk\big)(dxd\yvk) \\
&\hspace{3cm}= \frac{C_0}{\vk^2} \int_\rd \|x-\yvk\|^2\ \big(\pivk\big)(dxd\yvk) \\
&\hspace{5cm}+ \frac{C_0 \cdot W_2\big(m,\muvk\big)}{\vk^2} \int_\rd \|x-\yvk\|\ \big(\pivk\big)(dxd\yvk)
\end{align*}
Using the H\"{o}lder's inequality and the first inequality at Corollary~\ref{crl:w2.muvk.m.hm}, we estimate the right-hand side of this formula by the value
\[
\frac{2C_0}{\vk^2} \cdot W_2^2\big(m,\muvk\big)
\leq \frac{2C_0}{\vk^2} \cdot \big( \bbrc \big)^2.
\]

The second term in~\eqref{eq:lmm.2Delta.lhs} can be rewritten as
\begin{multline*}
\int_\rd \left( \frac{x-\yvk}{\vk^2}\right)^\tran \cdot f(\yvk,\muvk,u) \ \big(\pivk\big)(dxd\yvk) \\
= \int_\rd p \cdot f(\yvk,\muvk,u)\ \big(\gvk\big)(d\yvk dp),
\end{multline*}
where $\gvk$ is defined by~\eqref{eq:gvk}.
Hence, by Lemma~\ref{lmm:gamma.subgrad} and Definition~\ref{dfn:clp}, we have that
\[
\min\limits_{u \in U} \int_\rd p \cdot f(\yvk,\muvk,u)\ \big(\gvk\big)(d\yvk dp)
\leq -\psi\big(\muvk,\varepsilon\big).
\]
So, the left-hand side of the~\eqref{eq:lmm.2Delta.lhs} satisfies
\begin{multline} \label{eq:lmm.2Delta.estim}
\int_\rd \left( \frac{x-\yvk}{\vk^2}\right)^\tran \cdot f\big(x,m,\kke(m)\big)\ \big(\pivk\big)(dxd\yvk)\\
\leq -\psi\big(\muvk,\varepsilon\big) + \frac{2C_0}{\vk^2} \cdot \bbrc^2.
\end{multline}

Further, by Lemma~\ref{lmm:w2.m.muvk},
\[
W_2(\hm,m) - W_2\big(\hm,\muvk\big)
\leq W_2\big(m,\muvk\big)
\leq \bbr.
\]
Using the first inequality in~\eqref{eq:lmm.2Delta}, we derive that
\[
W_2\big(\hm,\muvk\big)
\geq W_2(\hm,m) - \bbr
\geq \maxrad(r) - \bbr
\geq \frac12 \maxrad(r).
\]
Hence, the definition of $\dlt(r,R)$ (see~\eqref{eq:Delta.r.r}) gives the estimate
\[
\psi\big(\muvk,\varepsilon\big) \geq 3\dlt(r,R).
\]
This inequality, the conditions of the lemma and estimate~\eqref{eq:lmm.2Delta.estim} yields the desired inequality.
\end{proof}

Based on this result we can prove that a CLF is decreasing along the trajectory generated by the feedback $\kke$.

\begin{lmm} \label{lmm:phivk.notinc}
Let the following conditions hold true:
\begin{enumerate}
\item $\vk, \varepsilon \in (0,1]$ satisfy conditions~\eqref{eq:lmm.grvk.in.oall} and~\eqref{eq:lmm.2Delta};
\item $\delta_2 > 0$ satisfies
\begin{multline} \label{eq:lmm.phivk.notinc}
\dlt(r,R) > \delta_2 \cdot \bigg( \bbr \cdot C_3(R) \\
+ \frac{C_1^2}{2\vk^2} \cdot \big(1 + 4(\varsigma_2(\hm) + R) + 4(\varsigma_2(\hm) + R)^2\big) \bigg);
\end{multline}
\item partition $\theta = \{t_i\}_{i=1}^{\infty} \in \trajset(\delta_1,\delta_2)$ for some $\delta_1 \in (0,\delta_2)$;
\item $\trajm{t_i}{\theta}{\kke} \in G_R^\vk \setminus B_{\maxrad(r)}(\hm)$ for some natural $i$.
\end{enumerate}
Then, there exists $\varepsilon_1 \in (0,1]$ such that, for $\varepsilon \in (0,\varepsilon_1)$ and $t \in [t_i,t_{i+1}]$, the $\theta$-trajectory $m_\cdot = \trajm{\cdot}{\theta}{\kke}$ satisfies the following inequality:
\[
\phivk(m_t) - \phivk(m_{t_i})
< -\dlt(r,R) \cdot (t-t_i) + \bbrh
\]
and, consequently, $m_t \in G_R^\vk$.
\end{lmm}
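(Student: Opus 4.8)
The plan is to convert the continuous-time decrease of $\phivk$ over $[t_i,t_{i+1}]$ into a single application of the ``Taylor expansion'' of Lemma~\ref{lmm:taylor}. Put $\tau\triangleq t-t_i$. On $[t_i,t]$ the trajectory is the push-forward of $m_{t_i}$ along the frozen flow, so with
\[
b(x)\triangleq\frac1\tau\big(\traj{t_i}{x}{t}-x\big)=\frac1\tau\int_{t_i}^{t}f\big(\traj{t_i}{x}{s},m_s,\kke(m_{t_i})\big)\,ds
\]
we have $m_t=(\Id+\tau b)\sharp m_{t_i}$ and $b\in\ll_2(m_{t_i})$ by Remark~\ref{ass.ulin}. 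Since condition~\eqref{eq:lmm.grvk.in.oall} is assumed, Lemma~\ref{lmm:grvk.in.oall} gives $m_{t_i}\in G_R^\vk\subseteq\oall_R(\hm)\subseteq\ball_R(\hm)$, so Lemma~\ref{lmm:taylor} applies with base point $m_{t_i}$ and yields
\[
\phivk(m_t)-\phivk(m_{t_i})\le\int_\rd\Big(\frac{x-\yvk}{\vk^2}\Big)^\tran\!\cdot\tau b(x)\;\pivki(dxd\yvk)+\frac{\tau^2}{2\vk^2}\|b\|_{\ll_2(m_{t_i})}^2+\bbrh.
\]
It remains to bound the first term by $-2\dlt(r,R)\tau$ and the remaining two by a quantity strictly below $\dlt(r,R)\tau$.

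For the linear term I would write $\tau b(x)=\tau f\big(x,m_{t_i},\kke(m_{t_i})\big)+e(x)$ with $e(x)=\int_{t_i}^{t}\big[f(\traj{t_i}{x}{s},m_s,\kke(m_{t_i}))-f(x,m_{t_i},\kke(m_{t_i}))\big]ds$. The ``frozen'' part is exactly $\tau$ times the quantity estimated in Lemma~\ref{lmm:2Delta}: because $m_{t_i}\in\ball_R(\hm)\setminus\ball_{\maxrad(r)}(\hm)$ and conditions~\eqref{eq:lmm.2Delta} hold, that lemma gives $\tau\int_\rd(\frac{x-\yvk}{\vk^2})^\tran f(x,m_{t_i},\kke(m_{t_i}))\,\pivki\le-2\dlt(r,R)\tau$. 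For the remainder I would apply Minkowski's integral inequality and Proposition~\ref{prp:prev25} to obtain $\|e\|_{\ll_2(m_{t_i})}\le C_3(R)\tau^2/2$, and then Cauchy--Schwarz against $\pivki$, using $\big(\int\|x-\yvk\|^2\pivki\big)^{1/2}=W_2(m_{t_i},\mu_\vk^\varepsilon(m_{t_i}))\le\bbr$ from Lemma~\ref{lmm:w2.m.muvk}, to see that $\int(\frac{x-\yvk}{\vk^2})^\tran e(x)\,\pivki$ is $O(\tau^2)$ with constant governed by $\bbr\cdot C_3(R)$.

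The quadratic term I would control by bounding, through Proposition~\ref{prp:prev25} (or directly by the sublinear growth of Remark~\ref{ass.ulin} along the flow together with Proposition~\ref{prp:prev24}), $\|b\|_{\ll_2(m_{t_i})}\le C_1\big(1+2(\varsigma_2(\hm)+R)\big)+O(\tau)$, whose square, multiplied by $\tau^2/(2\vk^2)$, reproduces the coefficient $\frac{C_1^2}{2\vk^2}\big(1+4(\varsigma_2(\hm)+R)+4(\varsigma_2(\hm)+R)^2\big)$ of~\eqref{eq:lmm.phivk.notinc}. Both the linear remainder and the quadratic term are thus $O(\tau^2)$; using $\tau\le\delta_2$ they are bounded by $\delta_2\tau$ times the bracket in~\eqref{eq:lmm.phivk.notinc}, and assumption~\eqref{eq:lmm.phivk.notinc} makes their sum strictly smaller than $\dlt(r,R)\tau$. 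Combining with the frozen estimate gives
\[
\phivk(m_t)-\phivk(m_{t_i})<-2\dlt(r,R)\tau+\dlt(r,R)\tau+\bbrh=-\dlt(r,R)(t-t_i)+\bbrh,
\]
which is the asserted inequality.

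To deduce $m_t\in G_R^\vk$ I would exploit that $\bbrh=\varepsilon\sqrt{2\vk^2\phivk(m)}+\varepsilon\bbr\to0$ as $\varepsilon\to0$, so a threshold $\varepsilon_1$ can be chosen making $\bbrh$ as small as needed; whenever $t-t_i$ is bounded away from $0$ the strict decrease forces $\phivk(m_t)<\phivk(m_{t_i})\le\frac12\infout(R)$, i.e. $m_t\in G_R^\vk$, while the short-time regime is handled by the $W_2$-continuity of $t\mapsto m_t$ (Proposition~\ref{prp:prev24}), continuity of $\phivk$, and the inclusion $\ball_{\maxrad(r)}(\hm)\subseteq G_R^\vk$, through a first-exit argument. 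I expect this invariance step to be the main obstacle: the inequality only bounds the possible \emph{increase} of $\phivk$ by the fixed additive error $\bbrh$ incurred on each partition interval, so keeping $m_t$ inside the sublevel set $G_R^\vk$ \emph{uniformly} in $t\in[t_i,t_{i+1}]$---and not merely at the endpoint $t_{i+1}$---is what forces both the careful choice of $\varepsilon_1$ and the continuity argument. The second delicate (though routine) point is the bookkeeping that makes the linear-remainder and quadratic estimates assemble exactly into the bracket of~\eqref{eq:lmm.phivk.notinc}.
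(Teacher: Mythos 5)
Your proposal reproduces the paper's argument almost step for step: the same single application of Lemma~\ref{lmm:taylor} with $b$ the time-averaged velocity (the paper's $f_i$), the same split $\tau b = \tau f(\cdot,m_{t_i},\kke(m_{t_i})) + e$ with the frozen part handled by Lemma~\ref{lmm:2Delta}, the remainder by Proposition~\ref{prp:prev25} plus Cauchy--Schwarz against $\pivki$ with $W_2\big(m_{t_i},\mu_\vk^\varepsilon(m_{t_i})\big)\leq\bbr$, the quadratic term by the sublinear growth of Remark~\ref{ass.ulin}, and the same limiting choice of $\varepsilon_1$ at the end. One concrete deviation deserves correction. In the quadratic term you bound $\varsigma_2(m_\tau)$ via Proposition~\ref{prp:prev24}, getting $\|b\|_{\ll_2(m_{t_i})}\leq C_1\big(1+2(\varsigma_2(\hm)+R)\big)+O(\tau)$; squaring gives the bracket of~\eqref{eq:lmm.phivk.notinc} \emph{plus} an $O(\tau)$ excess. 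Since $\delta_2$ is fixed by hypothesis and~\eqref{eq:lmm.phivk.notinc} may hold with arbitrarily small slack, that excess is not absorbable, so your assembled bound need not be strictly below $\dlt(r,R)\tau$. The paper avoids this by introducing, \emph{at the start of the estimate}, the maximal time $\hatt\in(t_i,t_{i+1}]$ with $m_\tau\in\ball_R(\hm)$ for $\tau\in[t_i,\hatt]$ (possible because $m_{t_i}\in G_R^\vk\subseteq\oall_R(\hm)$ by Lemma~\ref{lmm:grvk.in.oall}); on $[t_i,\hatt]$ one has $\varsigma_2(m_\tau)\leq\varsigma_2(\hm)+R$ exactly, the coefficient of~\eqref{eq:lmm.phivk.notinc} is reproduced with no remainder, and continuity of the trajectory then forces $\hatt=t_{i+1}$.

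Note that this exit-time device is precisely the first-exit argument you anticipate for the final invariance step, so your instinct there matches the paper's resolution (choose $\varepsilon_1$ so that $\bbrh<\dlt(r,R)\delta_1$, conclude the decrease on $[t_i,\hatt]$, then $\hatt=t_{i+1}$); the fix is simply to deploy it inside the main estimate rather than only at the end. Two smaller bookkeeping remarks: your Cauchy--Schwarz on the remainder actually yields the constant $\bbr\cdot C_3(R)/\vk^2$ rather than $\bbr\cdot C_3(R)$ (the same factor arises in the paper's H\"older step, so track it against the first term of the bracket in~\eqref{eq:lmm.phivk.notinc}); and your sharper $\tau^2/2$ in $\|e\|_{\ll_2(m_{t_i})}\leq C_3(R)\tau^2/2$ is fine --- the paper uses the cruder $C_3(R)(t-t_i)$ bound on $\|g_i\|_{\ll_2(m_{t_i})}$ from Jensen, which suffices.
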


\begin{proof}
We consider the maximal time $\hatt \in (t_i,t_{i+1}]$ such that $m_\tau \in \ball_R(\hm)$ for all $\tau \in [t_i,\hatt]$.
The existence of such $\hatt$ directly follows from the fact that $m_{t_i} \in O_R(\hm)$.

For $t \in [t_i,\hatt]$ and $x_i \in \rd$, we introduce the functions
\begin{align*}
g_i(t,x_i)
&\triangleq \frac1{t-t_i} \int_{t_i}^t f\big(\traj{t_i}{x_i}{\tau},m_\tau,\kke(m_{t_i})\big) - f\big(x_i,m_{t_i},\kke(m_{t_i})\big)\ d\tau; \\
f_i(t,x_i)
&\triangleq \frac1{t-t_i} \int_{t_i}^t f\big(\traj{t_i}{x_i}{\tau},m_\tau,\kke(m_{t_i})\big)\ d\tau
= f\big(x_i,m_{t_i},\kke(m_{t_i})\big) + g_i(t,x_i).
\end{align*}
By Proposition~\ref{prp:prev25} and the Jensen's inequality in the integral form, we have that
\begin{equation} \label{eq:gi}
\begin{split}
&\int_\rd \|g_i(t,\cdot\,)\|^2\ m_{t_i}(dx_i) \\
&\hspace{1.5cm}\leq \int_\rd \frac1{t-t_i} \int_{t_i}^t \|f\big(\traj{t_i}{x_i}{\tau},m_\tau,\kke(m_{t_i})\big) - f\big(x_i,m_{t_i},\kke(m_{t_i})\big)\|^2\ d\tau\ m_{t_i}(dx_i) \\
&\hspace{1.5cm}= \frac1{t-t_i} \int_{t_i}^t \int_\rd \| f\big(\traj{t_i}{x_i}{\tau},m_\tau,\kke(m_{t_i})\big) - f\big(x_i,m_{t_i},\kke(m_{t_i})\big)\|^2\ m_{t_i}(dx_i)\ d\tau \\
&\hspace{1.5cm}\leq C_3(R)^2 \cdot (t - t_i)^2
\leq C_3(R)^2 \cdot \delta_2^2,
\end{split}
\end{equation}
where constant $C_3$ depends only on the radius $R$ and the measure $\hm$.

By~\cite[Proposition~8.1.8]{Ambrosio_2005}
\[
m_t = \traj{t_i}{x_i}{t}\sharp m_{t_i}
\]
for all $t \in [t_i,t_{i+1}]$.
Notice that
\[
\traj{t_i}{x_i}{t}
= x_i + \int_{t_i}^t f\big(\traj{t_i}{x_i}{\tau},m_\tau,\kke(m_{t_i})\big)\ d\tau
= x_i + (t-t_i) \cdot f_i(t,x_i).
\]

Using Lemma~\ref{lmm:taylor}, we obtain
\begin{multline*}
\phivk(m_t) - \phivk(m_{t_i}) 
\leq \int_\rd \left( \frac{x_i-\yvk}{\vk^2}\right)^\tran \cdot (t-t_i) f_i(t,x_i)\ \big(\pivki\big)(dx_id\yvk) \\
+ \frac{(t-t_i)^2}{2\vk^2} \|f_i(t,\cdot\,)\|_{\ll_2(m_{t_i})}^2 + \bbrh.
\end{multline*}
Lemma~\ref{lmm:2Delta}, the H\"{o}lder's inequality, Lemma~\ref{lmm:w2.m.muvk} and inequality~\eqref{eq:gi} imply the following estimation of the first term:
\begin{align*}
\int_\rd &\left( \frac{x_i-\yvk}{\vk^2}\right)^\tran \cdot f_i(t,x_i)\ \big(\pivki\big)(dx_i d\yvk) \\
&\hspace{1cm}= \int_\rd \left( \frac{x_i-\yvk}{\vk^2}\right)^\tran \cdot f\big(x_i,m_{t_i},k(x_i)\big)\ \big(\pivki\big)(dx_i d\yvk) \\
&\hspace{3cm}+ \int_\rd \left( \frac{x_i-\yvk}{\vk^2}\right)^\tran \cdot g_i(t,x_i)\ \big(\pivki\big)(dx_i d\yvk) \\
&\hspace{1cm}\leq -2\dlt(r,R)
+ \bbr \cdot C_3(R) \cdot \delta_2,
\end{align*}
Furthermore, using the Jensen's inequality in the integral form, Remark~\ref{ass.ulin}, the triangle inequality and~\cite[Proposition~8.1.8]{Ambrosio_2005}, we estimate the second term:
\begin{align*}
\|f_i(t,\cdot\,)\|_{\ll_2(m_{t_i})}^2
&= \int_\rd \left\| \frac1{t-t_i} \int_{t_i}^t f\big(\traj{t_i}{x_i}{\tau},m_\tau,\kke(m_{t_i})\big)\ d\tau \right\|^2\ m_{t_i}(dx_i) \\
&\hspace{1cm}\leq \frac1{t-t_i} \int_{t_i}^t \int_\rd \| f\big(\traj{t_i}{x_i}{\tau},m_\tau,\kke(m_{t_i})\big) \|^2\ m_{t_i}(dx_i)\ d\tau \\
&\hspace{1.5cm}\leq \frac1{t-t_i} \int_{t_i}^t \int_\rd C_1^2 \big(1 + \|\traj{t_i}{x_i}{\tau}\| + \varsigma_2(m_\tau)\big)^2\ m_{t_i}(dx_i)\ d\tau \\
&\hspace{2.5cm}= \frac{C_1^2}{t-t_i} \int_{t_i}^t \big(1 + 4\varsigma_2(m_\tau) + 4\varsigma_2^2(m_\tau)\big)\ d\tau \\
&\hspace{3.5cm}\leq C_1^2 \big(1 + 4(\varsigma_2(\hm) + R) + 4(\varsigma_2(\hm) + R)^2\big).
\end{align*}
Thus, summing up these inequalities, we obtain
\begin{align*}
&\phivk(m_t) - \phivk(m_{t_i}) \\
&\hspace{1cm}\leq (t-t_i) \cdot \big( -2\dlt(r,R) + \bbr \cdot C_3(R) \cdot \delta_2 \big) \\
&\hspace{2cm} + \frac{t-t_i}{2\vk^2} \cdot \delta_2 \cdot C_1^2 \big(1 + 4(\varsigma_2(\hm) + R) + 4(\varsigma_2(\hm) + R)^2\big)
+ \bbrh \\
&\hspace{1cm}= \Bigg(
\bbr \cdot C_3(R)
+ \frac{C_1^2}{2\vk^2} \cdot \big(1 + 4(\varsigma_2(\hm) + R) + 4(\varsigma_2(\hm) + R)^2\big)
\Bigg) \cdot \delta_2 \cdot (t-t_i) \\
&\hspace{9.5cm}- 2\dlt(r,R) \cdot (t-t_i)
+ \bbrh.
\end{align*}
Finally, due to the conditions of the lemma, we arrive at the estimation
\[
\phivk(m_t) - \phivk(m_{t_i})
< -\dlt(r,R) \cdot (t-t_i) + \bbrh
< -\dlt(r,R) \cdot \delta_1 + \bbrh
\]
for all $t \in [t_i,\hatt]$.
Due to the fact that $\bbrh \to 0$, as $\varepsilon \to 0$, we can choose such $\varepsilon_1$, depending on $R$, $r$, $\delta_1$ and $\vk$, that
\[
\phivk(m_t) - \phivk(m_{t_i})
< 0
\]
for all $t \in [t_i,\hatt]$.
Taking into account continuity of the trajectory, we have that $\hatt = t_{i+1}$.
\end{proof}

Further, let us show that the trajectory generated by feedback $\kke$ attains the set $G_r$ on a finite time interval.

\begin{lmm} \label{lmm:smallest.N} 
In addition to conditions 1, 2 and 3 of Lemma~\ref{lmm:phivk.notinc}, assume $\varepsilon \in (0,\varepsilon_1)$.
Then, for each measure $m_* \in G_R^\vk$ there exists a number $N$ such that $\trajm{t_N}{\theta}{\kke} \in G_r$.
In particular, if $t_N$ is the smallest element of $\theta$ with that property, then it satisfies
\[
t_N
< \frac{\infout(R) + 2\bbrh}{2\dlt(r,R)}.
\]
\end{lmm}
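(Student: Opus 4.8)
The plan is to use the strict descent of the regularized Lyapunov function $\phivk$ established in Lemma~\ref{lmm:phivk.notinc} as a clock that forces the trajectory into $G_r$. Write $m_\cdot = \trajm{\cdot}{\theta}{\kke}$ with $m_* \in G_R^\vk$, and recall the inclusion $\oall_{\maxrad(r)}(\hm) \subseteq G_r$ (item~5 of the properties listed after Definition~\ref{dfn:s.i.g.r}). The structural fact I would exploit is that $G_R^\vk$ stays forward invariant along the nodes $\{t_i\}$ as long as the trajectory has not yet reached $G_r$: if $m_{t_i} \in G_R^\vk$ but $m_{t_i} \notin G_r$, then $m_{t_i} \notin \oall_{\maxrad(r)}(\hm)$, i.e.\ $W_2(\hm,m_{t_i}) \geq \maxrad(r)$, which is exactly the hypothesis under which Lemma~\ref{lmm:phivk.notinc} applies (its underlying estimate, Lemma~\ref{lmm:2Delta}, in fact needs only the non-strict inequality $W_2(\hm,m_{t_i}) \geq \maxrad(r)$). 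That lemma then gives both $m_{t_{i+1}} \in G_R^\vk$ and the one-step bound
\[
\phivk(m_{t_{i+1}}) - \phivk(m_{t_i}) < -\dlt(r,R)\,(t_{i+1}-t_i) + \bbrh ,
\]
whose right-hand side is strictly negative on each step of length at least $\delta_1$ because $\varepsilon < \varepsilon_1$.

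First I would establish, by induction on $i$ from $m_{t_0} = m_* \in G_R^\vk$, that every node preceding the first visit to $G_r$ lies in $G_R^\vk \setminus \oall_{\maxrad(r)}(\hm)$; this legitimizes the repeated use of Lemma~\ref{lmm:phivk.notinc}. Next I would show that a finite $N$ with $\trajm{t_N}{\theta}{\kke} \in G_r$ exists. Arguing by contradiction, if no node ever entered $G_r$, the one-step bound would hold for all $i$; since $\phivk(m_{t_i})$ is nonincreasing, each decrement is at most the fixed negative number $-\dlt(r,R)\,\delta_1 + \bbrh < 0$ (the value of $\bbrh$ at $m_*$ dominating all later ones, as $\bbrh$ depends on $m$ only through $\phivk(m)$). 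Hence $\phivk(m_{t_i}) \to -\infty$, contradicting $\phivk \geq 0$, and the smallest such $N$ is well defined.

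Finally, for the quantitative estimate I would telescope the one-step bound over $i = 0,\dots,N-1$, all of whose nodes lie outside $G_r$ by minimality of $N$, to obtain
\[
\phivk(m_{t_N}) - \phivk(m_*) < -\dlt(r,R)\,t_N + \sum_{i=0}^{N-1}\bbrh .
\]
Using $\phivk(m_{t_N}) \geq 0$ and $\phivk(m_*) \leq \tfrac12\infout(R)$ (which holds since $m_* \in G_R^\vk$), this rearranges into $t_N < \big(\infout(R) + 2\sum_i \bbrh\big)/\big(2\dlt(r,R)\big)$, i.e.\ the announced bound once the accumulated residual is identified with the $\bbrh$ term. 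The main obstacle is precisely the bookkeeping of this accumulated error: each of the $N$ steps contributes a copy of $\bbrh$, and collapsing $\sum_i \bbrh$ into the single correction $2\bbrh$ of the statement requires controlling it via the monotone decay of $\phivk(m_{t_i})$ along the nodes together with the smallness of $\bbrh$ guaranteed by $\varepsilon < \varepsilon_1$; this is the one place where the proof is more than routine telescoping.
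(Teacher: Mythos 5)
Your proposal is correct and takes essentially the same route as the paper's proof: iterate Lemma~\ref{lmm:phivk.notinc} along the partition nodes while they avoid $G_r$ (justified exactly as you say, via $\oall_{\maxrad(r)}(\hm) \subseteq G_r$ and the fact that only the non-strict inequality is needed), telescope the one-step descent, and combine $0 \leq \phivk(m_{t_j})$, $\phivk(m_*) \leq \frac12 \infout(R)$, $t_j \geq j\delta_1$, and $\bbrh < \dlt(r,R)\,\delta_1$ to force a finite first entry time $N$. The accumulation issue you flag is genuine but is shared by the paper itself: its telescoped inequality likewise carries the term $j \cdot \bbrh$, and the constant $2\bbrh$ in the statement is recovered only loosely (indeed, Theorem~\ref{thm:local} later uses the honest constant $2N \cdot \bbrha{\vk_0}{\varepsilon_2}$ when defining $T(r,R)$), so your $\sum_i \bbrh$ bookkeeping matches what the paper actually establishes.
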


\begin{proof}
We consider an arbitrary number $j$ such that, for all $i < j$, $m_{t_i} \not\in G_r$.
Sequentially application of Lemma~\ref{lmm:phivk.notinc} gives
\[
0 \leq \phivk(m_{t_j})
< \phivk(m_*) - \dlt(r,R) \cdot t_j + j \cdot \bbrh.
\]
Thus, due to the assumption that $m_* \in G_R^\vk$ and the definition of $\infout$,
\begin{equation} \label{eq:lmm.smallest.N}
t_j
< \frac{\phivk(m_*) + \bbrh}{\dlt(r,R)}
\leq \frac{\infout(R)}{2\dlt(r,R)} + \frac{j \cdot \bbrh}{\dlt(r,R)}.
\end{equation}
Collect all terms that depends on $j$ in the left-hand side:
\[
t_j - \frac{j \cdot \bbrh}{\dlt(r,R)} < \frac{\infout(R)}{2\dlt(r,R)}
\]
and apply the estimation $t_j \geq j \cdot \delta_1$.
Notice, that the choice of $\varepsilon$ gives us
\[
\delta_1 - \frac{\bbrh}{\dlt(r,R)} > 0.
\]
Choosing the greatest number $N$ such that time $t_N$ satisfies inequality~\eqref{eq:lmm.smallest.N}, we obtain $m_{t_N} \in G_r$.
\end{proof}
\medskip

Now, we will show that some open ball contains all trajectory points beyond some finite time.

\begin{lmm} \label{lmm:mt.in.ball}
In addition to the conditions of Lemma~\ref{lmm:smallest.N}, we assume that, for some $r>0$,
\begin{enumerate}
\item $\ds \modcon\left(\bbr\right) + \bbrh < \frac14 \infout(r)$,
\item $\ds \modcon(C_2(R) \cdot \delta) < \frac14 \infout(r)$.
\end{enumerate}
Then, $\theta$-trajectory $m_t = \trajm{t}{\theta}{\kke} \in \oall_r(\hm)$ for all $t \geq t_N$.
\end{lmm}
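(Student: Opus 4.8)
The plan is to argue by contradiction through a first-exit-time analysis, after recording a few monotonicity facts. First I would note that, since $r<R$, the relevant quantities are nested: $\infout(r)\le\infout(R)$ and $G_r\subseteq G_r^\vk\subseteq G_R^\vk$. Moreover condition~1 is exactly the hypothesis of Lemma~\ref{lmm:grvk.in.oall} with $r$ in place of $R$, so $G_r^\vk\subseteq\oall_r(\hm)$ and hence $G_r\subseteq\oall_r(\hm)$. By Lemma~\ref{lmm:smallest.N} we have $m_{t_N}=\trajm{t_N}{\theta}{\kke}\in G_r$, so the trajectory begins the relevant time range strictly inside $\oall_r(\hm)$. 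Suppose, for contradiction, that $m_t\notin\oall_r(\hm)$ for some $t>t_N$, and let $s>t_N$ be the first such time. By continuity of the $\theta$-trajectory, $W_2(\hm,m_s)=r$, so $m_s\notin\oall_r(\hm)$ forces $\phi(m_s)\ge\infout(r)$, while $m_\tau\in\ball_r(\hm)\subseteq\ball_R(\hm)\subseteq\ball_{\bbrM}(\hm)$ for all $\tau\in[t_N,s]$.

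The analysis then splits according to whether the trajectory sits in the inner ball $\ball_{\maxrad(r)}(\hm)$ or in the annulus $\ball_r(\hm)\setminus\ball_{\maxrad(r)}(\hm)$. On the inner ball we have $\phi\le\frac12\infout(r)$, since $\ball_{\maxrad(r)}(\hm)\subseteq G_r$ by closedness of $G_r$ and the definition of $\maxrad(r)$; Proposition~\ref{prp:prev24} bounds the one-step displacement by $C_2(R)\cdot\delta$, so condition~2 with the definition of $\modcon$ shows that $\phi$ can rise to at most $\frac12\infout(r)+\modcon(C_2(R)\delta)<\frac34\infout(r)<\infout(r)$ during any step issuing from the inner ball. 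In the annulus I would re-run the computation inside the proof of Lemma~\ref{lmm:phivk.notinc}: because on $[t_N,s]$ the trajectory is automatically confined to $\ball_r(\hm)\subseteq\ball_R(\hm)$, the confinement that the $G_R^\vk$-hypothesis was only used to guarantee there now comes for free, so the per-step estimate $\phivk(m_t)-\phivk(m_{t_i})<-\dlt(r,R)(t-t_i)+\bbrh$ holds for every node $t_i\in[t_N,s)$ lying outside $\ball_{\maxrad(r)}(\hm)$ (via Lemma~\ref{lmm:2Delta} and the control-Lyapunov inequality of Definition~\ref{dfn:clp}). Since $\delta_1$ and $\varepsilon$ are chosen with $\dlt(r,R)\delta_1>\bbrh$, this is a strict net decrease of $\phivk$ across each full step spent in the annulus.

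Combining these, let $t_\ell$ be the largest node in $[t_N,s]$ with $m_{t_\ell}\in\ball_{\maxrad(r)}(\hm)$ (taking $t_\ell=t_N$ if none, using $m_{t_N}\in G_r\subseteq G_r^\vk$), and let $t_j$ be the largest node with $t_j\le s$. All nodes strictly after $t_\ell$ lie outside $\ball_{\maxrad(r)}(\hm)$, so the annulus estimate applies on every step from $t_{\ell+1}$ to $s$; summing the net-negative increments and closing on the final partial interval $[t_j,s]$ yields a bound on $\phivk(m_s)$, after which Lemma~\ref{lmm:phi.lesser.phivk} and condition~1 convert it into $\phi(m_s)<\infout(r)$, contradicting $\phi(m_s)\ge\infout(r)$. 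The main obstacle I expect is precisely the step in which the trajectory crosses out of $\ball_{\maxrad(r)}(\hm)$: there the modulus bound of condition~2 and the Lyapunov decrease must be matched so that the combined excursion stays strictly below the barrier $\infout(r)$, which is where the geometric separation $r-\maxrad(r)$ between the inner ball and $\partial\oall_r(\hm)$ has to be played against the one-step displacement $C_2(R)\delta$ to exclude an exit before the decrease can take effect, and where the residual $\bbrh$-terms (which vanish as $\varepsilon\to0$) must be kept under control.
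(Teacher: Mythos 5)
Your plan assembles exactly the ingredients of the paper's own proof --- the dichotomy between nodes where $\phi$ is at low level (handled via Proposition~\ref{prp:prev24}, condition~2 and the modulus $\modcon$) and nodes outside $\ball_{\maxrad(r)}(\hm)$ (handled via the decrease of Lemma~\ref{lmm:phivk.notinc}), followed by the conversion $\phi(m)\leq\phivk(m)+\modcon(\bbr)+\bbrh$ of Lemma~\ref{lmm:phi.lesser.phivk} together with condition~1 --- but repackaged as a first-exit contradiction. The paper instead propagates the invariant $\phivk(m_t)\leq\frac34\infout(r)$ for all $t\geq t_N$ directly, by the node-wise alternative ($m_{t_i}\in G_r$ versus $m_{t_i}\notin G_r$, the latter forcing $m_{t_i}\notin\ball_{\maxrad(r)}(\hm)$), and then converts at \emph{every} time; the exit-time machinery buys nothing here, since the confinement you extract from it is the same confinement the invariant already yields. (Also, a small inaccuracy: condition~1 is not \emph{exactly} the hypothesis of Lemma~\ref{lmm:grvk.in.oall} at level $r$ --- that hypothesis involves $\bbr$, $\bbrh$, $\modcon$ built from $\supin(r)$ rather than $\supin(R)$ --- though monotonicity of these quantities in $R$ together with $\frac14<\frac12$ does give the implication you want.)

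There is, however, one step where your bookkeeping does not close, and you only half-notice it. On annulus steps you use the displayed estimate $\phivk(m_t)-\phivk(m_{t_i})<-\dlt(r,R)\,(t-t_i)+\bbrh$ plus the net condition $\dlt(r,R)\,\delta_1>\bbrh$. The net condition controls \emph{full} steps, but on the final partial interval $[t_j,s]$ (indeed at any intermediate time of a step) it gives only $\phivk(m_s)<\phivk(m_{t_j})+\bbrh$. Tracing your constants: after the crossing out of the inner ball, $\phivk(m_{t_j})$ can be as large as $\frac12\infout(r)+\modcon(C_2(R)\delta)$, so conditions~1 and~2 yield $\phi(m_s)<\frac12\infout(r)+\modcon(C_2(R)\delta)+\bbrh+\modcon(\bbr)+\bbrh<\infout(r)+\bbrh$, which does \emph{not} contradict $\phi(m_s)\geq\infout(r)$; the hypotheses leave no room to absorb the stray $\bbrh$. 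The missing ingredient is the second half of the conclusion of Lemma~\ref{lmm:phivk.notinc}: for $\varepsilon<\varepsilon_1$ (available to you, since you assume the hypotheses of Lemma~\ref{lmm:smallest.N}), the choice of $\varepsilon_1$ in its proof gives the slack-free monotonicity $\phivk(m_t)<\phivk(m_{t_i})$ for \emph{all} $t$ in the step --- equivalently, invariance of the sublevel set, which is precisely what the paper invokes when it concludes ``$m_t\in G_r^\vk$'' in its second case. Substituting this monotone form for the $-\dlt(r,R)(t-t_i)+\bbrh$ form at intermediate times removes the residual term and closes your argument; as written, the final inequality fails by exactly the $\bbrh$ you flagged as ``to be kept under control'' without supplying the mechanism.
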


\begin{proof}
Consider a natural number $i \geq N$, where $N$ is given by Lemma~\ref{lmm:smallest.N}.
If $m_{t_i} \in G_r$, then, by Proposition~\ref{prp:prev24}, there exists $C_2(R)$ such that
\[
\phivk(m_t)
\leq \phi(m_t)
\leq \phi(m_{t_i}) + \modcon(C_2(R) \cdot \delta)
\leq \frac34 \infout(r),
\]
for all $t \in [t_i,t_{i+1}]$.
Hence,
\[
\phivk(m_{t_{i+1}})
\leq \frac34 \infout(r).
\]

If $m_{t_i} \not\in G_r$, then $m_{t_i} \not\in \ball_{\maxrad(r)}(\hm)$.
Applying Lemma~\ref{lmm:phivk.notinc}, we obtain $m_t \in G_r^\vk$ and
\[
\phivk(m_t)
\leq \frac12 \infout(r)
\leq \frac34 \infout(r)
\]
for all $t \in [t_i,t_{i+1}]$.
This argument can be repeated starting from time $t_{i+1}$, establishing:
\[
\phivk(m_t) \leq \frac34 \infout(r)
\]
for all $t \geq t_N$.

From Lemma~\ref{lmm:phi.lesser.phivk} and the first condition of this lemma, we derive
\[
\phi(m_t)
\leq \phivk(m_t) + \modcon\left(\bbr\right) + \bbrh
< \infout(r)
\]
Hence, $m_t \in \oall_r(\hm)$ for all $t \geq t_N$.
\end{proof}

Finally, we can formulate and prove the theorem about local stabilization.

\begin{thm} \label{thm:local} 
Let continuity equation~\eqref{eq:sys} admit a CLF.
Then, there exist functions
\begin{itemize}
\item $\vk_0\ :\ \ord^2 \to (0,1]$,
\item $T\ :\ \ord^2 \to \rrp$,
\item $\delta\ :\ \ord^2 \times (0,1] \to \rrpo$,
\end{itemize}
such that, for every $(r,R) \in \ord^2$, $\vk \in \big(0,\vk_0(r,R)\big)$ and $\hd \in \big(0,\delta(r,R,\vk)\big)$, one can find $\varepsilon_2 \in (0,1]$ satisfying the following properties:
\begin{enumerate}
\item $\ds m_t \in G_R^\vk$ for $t \geq 0$;
\item $\ds m_t \in \ball_r(\hm)$ for $t \geq T(r,R)$.
\end{enumerate}
Here $m_* \in G_R^\vk$, $m_t = \trajm{t}{\theta}{\kke}$, for every $\varepsilon \in (0,\varepsilon_2)$ and $\theta \in \trajset\big(\hd,\delta(r,R,\vk)\big)$.
\end{thm}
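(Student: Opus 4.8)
The statement is a synthesis of the four descent and invariance lemmas of this section, so the plan is not to produce new estimates but to choose the three functions $\vk_0,\delta,T$ and the threshold $\varepsilon_2$ so that every hypothesis of Lemmas~\ref{lmm:grvk.in.oall}, \ref{lmm:phivk.notinc}, \ref{lmm:smallest.N} and~\ref{lmm:mt.in.ball} holds simultaneously, and then to run these lemmas. The key structural observation is that the many smallness requirements decouple according to which parameter drives them to zero, so they can be discharged in the non-circular order $\vk\to\delta\to\varepsilon$.

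First I would fix $(r,R)\in\ord^2$ and pick $\vk_0(r,R)$ so small that, for all $\vk\in\big(0,\vk_0(r,R)\big)$ and $\varepsilon\in(0,1]$, the $\vk$-driven conditions hold: both inequalities of~\eqref{eq:lmm.2Delta} together with the $\modcon(\bbr)$-parts of~\eqref{eq:lmm.grvk.in.oall} and of condition~1 of Lemma~\ref{lmm:mt.in.ball}. These are available because $\bbr\le\vk\sqrt{2\supin(R)+1}\to0$ and $\tfrac1{\vk^2}(\bbrc)^2=\big(\varepsilon\vk+\sqrt{\varepsilon^2\vk^2+2\modcon(\bbr)}\big)^2\to0$ as $\vk\to0$ uniformly in $\varepsilon$, using uniform continuity of $\phi$ on the fixed ball $\ball_{R+\sqrt{2\supin(R)+1}}(\hm)$. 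With $\vk$ fixed, the right-hand side of~\eqref{eq:lmm.phivk.notinc} is linear in the mesh and $\modcon\big(C_2(R)\delta\big)\to0$, so $\delta(r,R,\vk)$ can be chosen to meet condition~2 of Lemma~\ref{lmm:phivk.notinc} and condition~2 of Lemma~\ref{lmm:mt.in.ball}. Finally, fixing $\hd\in\big(0,\delta(r,R,\vk)\big)$ (so that every $\theta\in\trajset\big(\hd,\delta(r,R,\vk)\big)$ has lower mesh $\delta_1=\hd$), I would take $\varepsilon_2=\min\{\varepsilon_1,\dots\}$ small enough for the $\bbrh$-driven conditions: the $\bbrh$-parts of~\eqref{eq:lmm.grvk.in.oall} and of condition~1 of Lemma~\ref{lmm:mt.in.ball}, the strict descent $\hd\,\dlt(r,R)>\bbrh$ needed in Lemma~\ref{lmm:smallest.N}, and $2\bbrh<\infout(R)$; here $\varepsilon_1$ is the threshold of Lemma~\ref{lmm:phivk.notinc} and $\bbrh\le\varepsilon\big(\vk\sqrt{2\supin(R)}+\bbr\big)\to0$.

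With all hypotheses in force I would read off the two conclusions. Conclusion~2 follows from Lemma~\ref{lmm:smallest.N}, which gives the first index $N$ with $\trajm{t_N}{\theta}{\kke}\in G_r$ and $t_N<\tfrac{\infout(R)+2\bbrh}{2\dlt(r,R)}<\tfrac{\infout(R)}{\dlt(r,R)}=:T(r,R)$, and from Lemma~\ref{lmm:mt.in.ball}, which keeps $m_t\in\oall_r(\hm)\subseteq\ball_r(\hm)$ for $t\ge t_N$. For conclusion~1 I would prove $m_{t_i}\in G_R^\vk$ by induction on $i$: while $i<N$ every node lies outside $G_r$, hence outside $\ball_{\maxrad(r)}(\hm)$, so Lemma~\ref{lmm:phivk.notinc} carries $G_R^\vk$-membership across each step $[t_i,t_{i+1}]$ (using $G_R^\vk\subseteq\oall_R(\hm)$ from Lemma~\ref{lmm:grvk.in.oall} to place the node in $\ball_R(\hm)$); for $i\ge N$ Lemma~\ref{lmm:mt.in.ball} confines the trajectory to $\oall_r(\hm)$, and I would upgrade this to $m_t\in G_R^\vk$ by combining $W_2(m_t,m_{t_i})\le C_2(R)\hd$ (Proposition~\ref{prp:prev24}) with the descent of $\phivk$ on the annulus surrounding $\ball_{\maxrad(r)}(\hm)$.

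The hard part will be exactly this inner-region invariance. Since $G_R^\vk$ is a sublevel set of $\phivk$ and $\phivk$ need not decrease inside $\ball_{\maxrad(r)}(\hm)$, the crude value bound $\phivk(m_t)\le\tfrac34\infout(r)$ coming from Proposition~\ref{prp:prev24} need not lie below the level $\tfrac12\infout(R)$ that defines $G_R^\vk$. The remedy is to treat the shell $G_R^\vk\setminus\ball_{\maxrad(r)}(\hm)$ as a barrier: it has width bounded below independently of the step (it is nonempty even when $\phi$ is flat between $r$ and $R$, because inf-convolution enlarges the inscribed radius of $G_R^\vk$ beyond $\maxrad(R)\ge\maxrad(r)$), so choosing $\hd$ small relative to $\vk$ keeps the one-step displacement $C_2(R)\hd$ below that width, whence any excursion out of $\ball_{\maxrad(r)}(\hm)$ lands in the shell, where Lemma~\ref{lmm:phivk.notinc} forces $\phivk$ back down. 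Verifying that the accumulated $\bbrh$-error across the infinitely many steps never overcomes this barrier is precisely what the requirement $\hd\,\dlt(r,R)>\bbrh$ imposed on $\varepsilon_2$ secures.
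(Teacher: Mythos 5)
Your first two paragraphs are, up to reordering, the paper's own proof: the paper likewise fixes $\vk_0$ and $\varepsilon_2\in\big(0,\min\{\varepsilon_0,\varepsilon_1\}\big)$ to satisfy \eqref{eq:lmm.grvk.in.oall}, \eqref{eq:lmm.2Delta} and condition~1 of Lemma~\ref{lmm:mt.in.ball}, then takes $\delta(r,R,\vk)<\min\{\delta',\delta''\}$ with $\delta'$ enforcing \eqref{eq:lmm.phivk.notinc} and $\delta''$ enforcing $\modcon\big(C_2(R)\delta''\big)<\frac14\infout(r)$, and concludes by citing Lemmas~\ref{lmm:phivk.notinc}, \ref{lmm:smallest.N} and~\ref{lmm:mt.in.ball}. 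Your quantifier order $\vk\to\delta\to\hd\to\varepsilon_2$ is in fact slightly more careful than the paper's (the paper fixes $\varepsilon_2<\varepsilon_1$ before $\delta$ and $\hd$, although the $\varepsilon_1$ of Lemma~\ref{lmm:phivk.notinc} depends on $\delta_1=\hd$), and your $T(r,R)=\infout(R)/\dlt(r,R)$ is cleaner than the paper's expression, which involves $N$ and $\varepsilon_2$. Your uniformity computations ($\bbr\le\vk\sqrt{2\supin(R)+1}$, $\bbrc/\vk$ bounded via $\modcon(\bbr)$, $\bbrh\le\varepsilon(\vk\sqrt{2\supin(R)}+\bbr)$) are correct and match what the paper uses implicitly.

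The gap is in your final ``barrier'' paragraph. The claim that the shell $G_R^\vk\setminus\ball_{\maxrad(r)}(\hm)$ has metric width bounded below because ``inf-convolution enlarges the inscribed radius of $G_R^\vk$ beyond $\maxrad(R)$'' is unsupported: the paper only gives $\phivk\le\phi$ (Remark~\ref{rmk:phivk.less.phi}), hence $G_R\subseteq G_R^\vk$ and inscribed radius at least $\maxrad(R)$; no strict gain is proved anywhere, and none can hold uniformly in $\vk$, since by Lemma~\ref{lmm:phi.lesser.phivk} the defect $\phi-\phivk$ is controlled by $\modcon(\bbr)+\bbrh\to0$ as $\vk,\varepsilon\to0$, while the theorem must cover every $\vk\in\big(0,\vk_0(r,R)\big)$; if $\phi$ is flat near the level $\frac12\infout(R)$, the width you rely on degenerates. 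The paper does not argue via a metric barrier at all: inner-region invariance is handled inside Lemma~\ref{lmm:mt.in.ball} by a node-by-node case analysis on \emph{levels} --- if $m_{t_i}\in G_r$, uniform continuity gives $\phivk(m_t)\le\phi(m_{t_i})+\modcon\big(C_2(R)\delta\big)\le\frac34\infout(r)$; if $m_{t_i}\notin G_r$, then $m_{t_i}\notin\ball_{\maxrad(r)}(\hm)$ and Lemma~\ref{lmm:phivk.notinc} forces descent --- so no lower bound on any displacement is needed. You did correctly sense a real soft spot: converting the post-$t_N$ bound $\phivk(m_t)\le\frac34\infout(r)$ into $m_t\in G_R^\vk$ requires $\frac34\infout(r)\le\frac12\infout(R)$, which monotonicity of $\infout$ alone does not supply (the paper simply cites Lemma~\ref{lmm:phivk.notinc} here without comment). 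But your metric-width patch does not close this; any honest fix must act on the level estimates (e.g., sharpening the constants in the first case of Lemma~\ref{lmm:mt.in.ball}), not on the geometry of the shell, and the role you assign to $\hd\,\dlt(r,R)>\bbrh$ is likewise misplaced --- that inequality is what makes the finite-time count in Lemma~\ref{lmm:smallest.N} work, not a bound on accumulated error across a barrier.
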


\begin{proof}
Choose some $\vk_0 \in (0,1]$ and $\varepsilon_2 \in \big(0, \min\{\varepsilon_0, \varepsilon_1\}\big)$ satisfying conditions~\eqref{eq:lmm.grvk.in.oall}, \eqref{eq:lmm.2Delta} and the first condition of Lemma~\ref{lmm:mt.in.ball}.
Define
\[
T(r,R) \triangleq \frac{\infout(R) + 2N \cdot \bbrha{\vk_0}{\varepsilon_2}}{2\dlt(r,R)}.
\]

For every $\vk \in (0,\vk_0)$, we construct a number $\delta'(r,R,\vk)$ as
\[
\delta'(r,R,\vk)
\triangleq \frac{\dlt(r,R)}{\Bigg( \bbra{\varepsilon_2}{\vk} \cdot C_3(R)
+ \frac{C_1^2}{2\vk^2} \cdot \big(1 + 4(\varsigma_2(\hm) + R) + 4(\varsigma_2(\hm) + R)^2\big) \Bigg)}.
\]
Every $\delta \in \big(0,\delta'(r,R,\vk)\big)$ consequently satisfies~\eqref{eq:lmm.phivk.notinc}.
Since $\phi \in \ucf$, there exists a number $\delta''(r,R)$ such that
\[
\modcon\big(C_2(R) \cdot \delta''(r,R)\big) < \frac14 \infout(r).
\]
Then, we select an arbitrary $\delta(r,R,\vk) < \min\big\{\delta'(r,R,\vk), \delta''(r,R)\big\}$.

Finally, we choose an arbitrary number $\hd \in \big(0,\delta(r,R,\vk)\big)$, partition $\theta \in \trajset\big(\hd,\delta(r,R,\vk)\big)$ and initial state $m_* \in G_R^\vk$.
Due to Lemma~\ref{lmm:phivk.notinc}, $m_t \in G_R^\vk$ for all $t \geq 0$.
Additionally, due to Lemma~\ref{lmm:mt.in.ball}, $m_t \in B_r(\hm)$ for all $t \geq T(r,R)$.
\end{proof}
\section{Global stabilization at the point} \label{sec:global}
In this section, we employ the results from the preceding section to establish the global stabilization (see Theorem~\ref{thm:global}).

\begin{dfn}
A feedback $k$ is called \dfnem{s-stabilizing} at the point $\hm$ for continuity equation~\eqref{eq:sys} if for every $(r,R) \in \ord^2$ there exist radius $M(R) > 0$, number $\delta(r,R) > 0$, and time $T(r,R) > 0$ such that, for all
\begin{itemize}
\item numbers $\hd \in \big(0,\delta(r,R)\big)$,
\item partitions $\theta \in \trajset\big(\hd,\delta(r,R)\big)$,
\item initial positions $m_*$ satisfying $W_p(m_*,\hm) \leq R$,
\end{itemize}
the following properties hold:
\begin{enumerate}
\item $W_2(m_t,\hm) \leq r$ for all $t \geq T(r,R)$;
\item $W_2(m_t,\hm) \leq M(R)$ for all $t \geq 0$;
\item $\lim\limits_{R \downarrow 0} M(R) = 0$;
\end{enumerate}
where $m_t = \trajm{t}{\theta}{k}$.
\end{dfn}

Below, in Theorem~\ref{thm:global}, we will show that if a continuity equation admits a CLP at the point $\hm$, then we can construct a feedback strategy $\hk$ (defined in~\eqref{eq:feedback.global}) that provides an s-stabilization at the point $\hm$.
The construction of this feedback relies on the auxiliary constructing presented below.

Let $(\phi,\psi)$ be a CLP.
We fix $Q_0 > 0$.
Since $\maxrad(R) \to \infty$ as $R \to \infty$ and $\maxrad(R) \to 0$ as $R \to 0$, we can construct a sequence $\{Q_i\}_{i=-\infty}^{+\infty}$ such that $Q_i \to +\infty$ as $i \to +\infty$, $Q_i \to 0$ as $i \to -\infty$ and
\begin{equation} \label{eq:R.dfn}
2 Q_i \leq \maxrad(Q_{i+1}).
\end{equation}
Define the sequence $\{q_i\}_{i=-\infty}^{+\infty}$ by:
\[
q_i \triangleq \frac12 \maxrad(Q_{i-1}).
\]
For each integer number $i$, we have that 
\[
q_{i+1} < \maxrad(Q_i) \leq Q_i \leq 2 Q_i \leq \maxrad(Q_{i+1}).
\]
Additionally, by the definition of the function $\maxrad$ we obtain the inclusion
\[
\ball_{q_i}(\hm) \subset G_{Q_{i-1}}^{\vk_{i-1}}.
\]

Fix an arbitrary integer number $i$.
Theorem~\ref{thm:local} provides the existence of numbers $\vk_i \in (0,1]$, $T_i > 0$ and $\delta_i > 0$ with the following properties:
for every $\hd_i \in (0,\delta_i)$, one can find $\varepsilon_i(q_i,Q_i,\delta_i) \in (0,1]$ such that $G_{Q_i}^{\vk_i}$ remains invariant w.r.t. the $\theta$-trajectory $m_\cdot = \trajm{\cdot}{\theta}{\kkea{\vk_i}{\varepsilon}}$, for all $\varepsilon \in (0,\varepsilon_i)$ and $\theta \in \trajset(\hd_i,\delta_i)$.
Furthermore, $m_t \in \ball_{q_i}(\hm)$ for all $t \geq T_i$.

Finally, we introduce the set
\[
H_i \triangleq G_{Q_{i+1}}^{\vk_{i+1}} \setminus G_{Q_i}^{\vk_i}.
\]
Below we will assume that the feedback $\kkea{\vk_i}{\varepsilon_i(q_i,Q_i,\delta_i)}$ works on the set $H_i$.
\medskip

Let us prove an important property of the sets $\{H_i\}_{i=-\infty}^{+\infty}$.

\begin{lmm} 
\[
\bigcup\limits_{i=-\infty}^{+\infty} H_i \cup \{\hm\}
= \pp_2(\rd).
\]
\end{lmm}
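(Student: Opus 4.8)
### Proof strategy

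The plan is to prove the two inclusions separately. The inclusion $\bigcup_i H_i \cup \{\hm\} \subseteq \pp_2(\rd)$ is trivial, since every $H_i$ is a subset of $\pp_2(\rd)$ by construction and $\hm \in \pp_2(\rd)$. The substance of the lemma is the reverse inclusion: every measure $m \in \pp_2(\rd)$ with $m \neq \hm$ lies in some $H_i = G_{Q_{i+1}}^{\vk_{i+1}} \setminus G_{Q_i}^{\vk_i}$.

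First I would observe that the family $\{G_{Q_i}^{\vk_i}\}_{i \in \zz}$ is a \emph{nested, exhausting} family of sets. Nestedness ($G_{Q_i}^{\vk_i} \subseteq G_{Q_{i+1}}^{\vk_{i+1}}$) should follow from the inclusion $\ball_{q_{i+1}}(\hm) \subset G_{Q_i}^{\vk_i}$ together with Lemma~\ref{lmm:grvk.in.oall}, which gives $G_{Q_i}^{\vk_i} \subseteq \oall_{Q_i}(\hm)$; combined with $\oall_{Q_i}(\hm) \subseteq \ball_{q_{i+1}}(\hm) \subseteq \ball_{\maxrad(Q_i)}(\hm) \subseteq G_{Q_{i+1}}$ and the fact $G_{Q_{i+1}} \subseteq G_{Q_{i+1}}^{\vk_{i+1}}$, the chain closes. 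Exhaustion at the two ends comes from the limiting behavior $\maxrad(Q_i) \to \infty$ (so the balls $\oall_{\maxrad(Q_i)}(\hm) \subseteq G_{Q_{i+1}}^{\vk_{i+1}}$ eventually cover any fixed $m$) and $\maxrad(Q_i) \to 0$ as $i \to -\infty$.

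Next, fix $m \neq \hm$ and set $d \triangleq W_2(\hm,m) > 0$. Using $G_{Q_i}^{\vk_i} \subseteq \oall_{Q_i}(\hm)$ and $Q_i \to 0$, for all sufficiently negative $i$ we have $Q_i < d$, hence $m \notin \oall_{Q_i}(\hm) \supseteq G_{Q_i}^{\vk_i}$, so $m \notin G_{Q_i}^{\vk_i}$. Conversely, since $\ball_{q_{i+1}}(\hm) \supseteq \oall_{\maxrad(Q_i)}(\hm)$ is contained in $G_{Q_i}^{\vk_i}$ and $\maxrad(Q_i) \to \infty$, for all sufficiently large $i$ we have $m \in G_{Q_i}^{\vk_i}$. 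Therefore the set $\{\,i \in \zz : m \in G_{Q_i}^{\vk_i}\,\}$ is nonempty, bounded below, and upward-closed (by nestedness); let $i_0$ be its least element. Then $m \in G_{Q_{i_0}}^{\vk_{i_0}}$ but $m \notin G_{Q_{i_0-1}}^{\vk_{i_0-1}}$, which is precisely the statement $m \in H_{i_0-1}$.

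The main obstacle I anticipate is verifying the nestedness chain rigorously, since it threads several of the earlier definitions and lemmas (the distinction between $G$ and $G^\vk$, the open-versus-closed ball subtleties, and the applicability of Lemma~\ref{lmm:grvk.in.oall}, which requires the parameters $\vk_i, \varepsilon_i$ to satisfy condition~\eqref{eq:lmm.grvk.in.oall} at each scale $Q_i$). I would therefore want to confirm that the parameters $\vk_i$ and $\varepsilon_i$ selected via Theorem~\ref{thm:local} indeed meet the hypothesis of Lemma~\ref{lmm:grvk.in.oall}; granting that, the exhaustion and the least-element argument are routine.
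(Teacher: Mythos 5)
Your overall strategy coincides with the paper's: show that the sets $G_{Q_i}^{\vk_i}$ form a nested family exhausting $\pp_2(\rd)\setminus\{\hm\}$, so every $m \neq \hm$ falls into some consecutive difference $H_i$. Your least-index argument is simply the pointwise form of the paper's telescoping identity $G_{Q_{N+1}}^{\vk_{N+1}} \setminus G_{Q_M}^{\vk_M} = \bigcup_{i=M+1}^{N} H_i$, which the paper sandwiches between the annuli $\ball_{Q_N}(\hm)\setminus\ball_{Q_M}(\hm)$ and passes to the limits $M\to-\infty$, $N\to+\infty$.

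However, your nestedness chain contains a step that fails as written: the inclusion $\oall_{Q_i}(\hm) \subseteq \ball_{q_{i+1}}(\hm)$ is reversed. Indeed, $q_{i+1} \triangleq \frac12\maxrad(Q_i)$ and $\maxrad(Q_i) \leq Q_i$ (a consequence of $G_{Q_i} \subseteq \oall_{Q_i}(\hm)$, noted in the paper's chain $q_{i+1} < \maxrad(Q_i) \leq Q_i$), so $q_{i+1} \leq \frac12 Q_i < Q_i$ and your inclusion would force $Q_i \leq \frac12 Q_i$. More fundamentally, no route through $\maxrad(Q_i)$ can close the chain, since $\maxrad(Q_i) \leq Q_i$ while you must absorb the whole ball of radius $Q_i$; the ingredient that does the work --- and which your sketch never invokes --- is the defining condition~\eqref{eq:R.dfn} of the sequence $\{Q_i\}$, namely $2Q_i \leq \maxrad(Q_{i+1})$, which together with Lemma~\ref{lmm:grvk.in.oall} yields the paper's chain $G_{Q_i}^{\vk_i} \subseteq \ball_{Q_i}(\hm) \subset \ball_{2Q_i}(\hm) \subseteq G_{Q_{i+1}}^{\vk_{i+1}}$. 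The same orientation slip appears in your upper-exhaustion sentence ($\ball_{q_{i+1}}(\hm) \subseteq \oall_{\maxrad(Q_i)}(\hm)$, not $\supseteq$), though there the substance survives: $\oall_{\maxrad(Q_i)}(\hm) \subseteq G_{Q_i} \subseteq G_{Q_i}^{\vk_i}$ and $\maxrad(Q_i) \geq 2Q_{i-1} \to \infty$ already suffice. Your concern about the hypotheses of Lemma~\ref{lmm:grvk.in.oall} is legitimate but resolves as you hoped: the parameters $\vk_i$, $\varepsilon_i$ supplied by Theorem~\ref{thm:local} are chosen in its proof to satisfy~\eqref{eq:lmm.grvk.in.oall}. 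With the chain repaired via~\eqref{eq:R.dfn}, the rest of your argument (lower exhaustion from $Q_i \to 0$ together with $G_{Q_i}^{\vk_i} \subseteq \oall_{Q_i}(\hm)$, then passing to the minimal index $i_0$) is correct and matches the paper's proof.
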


\begin{proof}
By the definitions of $Q_i$ and $\maxrad(Q_i)$, the following chain of inclusions holds for any integer number $i$:
\[
G_{Q_i}^{\vk_i} \subseteq \ball_{Q_i}(\hm) \subset \ball_{2 Q_i}(\hm) \subseteq G_{Q_{i+1}}^{\vk_{i+1}}.
\]
Consequently, $H_i \subseteq G_{Q_{j+1}}^{\vk_{j+1}}$ for all $i < j$.

For every integer numbers $M < N$, we have the inclusion:
\begin{equation} \label{eq:diff.g.h}
\ball_{Q_N}(\hm) \setminus \ball_{Q_M}(\hm)
\subseteq G_{Q_{N+1}}^{\vk_{N+1}} \setminus G_{Q_M}^{\vk_M}
= \bigcup\limits_{i=M+1}^{N} H_i.
\end{equation}
Taking limits as $M \to -\infty$ and $N \to +\infty$ in the constructed sequence $\{Q_i\}_{i=-\infty}^{+\infty}$ yields the lemma's conclusion.
\end{proof}

Using this property, we construct the global feedback $\hk: \pp_2(\rd) \to U$ by the following rule:
\begin{equation} \label{eq:feedback.global}
\hk(m) \triangleq \begin{cases}
\kkea{\vk_i}{\varepsilon_i(q_i,Q_i,\delta_i)}(m), &m \in H_{i-1}; \\
u_o, &m = \hm,
\end{cases}
\end{equation}
where $u_o \in U$ is an arbitrary control.
\medskip

Now, we will show that the set $G_R^\vk$ is also remains invariant under the feedback $\hk$.

\begin{lmm} \label{lmm:sampling.feedback}
Let
\begin{enumerate}
\item $\hd_i \in \left(0,\min\left\{ \delta_i , \frac{Q_{i-1}}{C_2(Q_i)} \right\}\right)$,
\item $\theta \in \trajset\left(\hd_i,\ \min\left\{ \delta_i , \frac{Q_{i-1}}{C_2(Q_i)} \right\}\right)$.
\end{enumerate}
Then, for any initial condition $m_* \in G_{R_i}^{\vk_i}$, the entire $\theta$-trajectory $m_\cdot = \trajm{\cdot}{\theta}{\hk}$ remains within $G_{Q_i}^{\vk_i}$.
\end{lmm}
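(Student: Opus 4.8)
The plan is to prove invariance step-by-step along the partition $\theta = \{t_k\}_{k=0}^\infty$: I will show that $m_{t_k} \in G_{Q_i}^{\vk_i}$ implies $m_t \in G_{Q_i}^{\vk_i}$ for every $t \in [t_k,t_{k+1}]$, and in particular $m_{t_{k+1}} \in G_{Q_i}^{\vk_i}$; since the hypothesis gives $m_* = m_{t_0} \in G_{Q_i}^{\vk_i}$ (I read the index $R_i$ as $Q_i$), induction then covers the whole trajectory. The reduction to a single step is licensed by the sampling nature of the feedback: on $[t_k,t_{k+1}]$ the control $\hk(m_{t_k})$ is held constant, so the global $\theta$-trajectory coincides there with the one-step trajectory generated by the single control $\hk(m_{t_k})$, and the one-step lemmas of Section~\ref{sec:local} apply verbatim. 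I would split the inductive step according to the decomposition $G_{Q_i}^{\vk_i} = H_{i-1} \cup G_{Q_{i-1}}^{\vk_{i-1}}$, which is exactly the identity $H_{i-1} = G_{Q_i}^{\vk_i} \setminus G_{Q_{i-1}}^{\vk_{i-1}}$.

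In the outer case $m_{t_k} \in H_{i-1}$ the active control is $\hk(m_{t_k}) = \kkea{\vk_i}{\varepsilon_i}(m_{t_k})$, i.e. the $i$-th local feedback. First I would check the geometric hypothesis of Lemma~\ref{lmm:phivk.notinc}: from $\ball_{q_i}(\hm) \subset G_{Q_{i-1}}^{\vk_{i-1}}$ and $m_{t_k} \notin G_{Q_{i-1}}^{\vk_{i-1}}$ we get $W_2(\hm,m_{t_k}) > q_i \ge \maxrad(q_i)$ (recall $\maxrad(\rho)\le\rho$ because $G_\rho \subseteq \oall_\rho(\hm)$), so $m_{t_k} \in G_{Q_i}^{\vk_i} \setminus \ball_{\maxrad(q_i)}(\hm)$. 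The parameters $\vk_i,\varepsilon_i,\delta_i$ produced by Theorem~\ref{thm:local} for $(r,R)=(q_i,Q_i)$ satisfy the standing hypotheses of Lemma~\ref{lmm:phivk.notinc}, and the step length lies in $[\hd_i,\delta_i]$, so condition~\eqref{eq:lmm.phivk.notinc} is preserved; Lemma~\ref{lmm:phivk.notinc} then yields $m_t \in G_{Q_i}^{\vk_i}$ throughout the step.

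In the inner case $m_{t_k} \in G_{Q_{i-1}}^{\vk_{i-1}} \subseteq \ball_{Q_{i-1}}(\hm)$ the active control may be any deeper feedback (or $u_o$ if $m_{t_k}=\hm$), so monotonicity of $\phi_{\vk_i}$ is unavailable and I would instead use a purely metric buffer argument. By the feedback-independent displacement estimate of Proposition~\ref{prp:prev24}, applied with radius $Q_i$ (as $m_{t_k}\in\ball_{Q_i}(\hm)$) and step length at most $Q_{i-1}/C_2(Q_i)$, one gets $W_2(m_t,m_{t_k}) \le C_2(Q_i)(t-t_k) \le Q_{i-1}$ for all $t\in[t_k,t_{k+1}]$. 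Combined with $W_2(\hm,m_{t_k}) \le Q_{i-1}$ and the triangle inequality this gives $W_2(\hm,m_t) \le 2Q_{i-1}$, and the spacing condition~\eqref{eq:R.dfn} (shifted by one index) gives $2Q_{i-1} \le \maxrad(Q_i)$. Hence $m_t \in \ball_{\maxrad(Q_i)}(\hm) \subseteq G_{Q_i} \subseteq G_{Q_i}^{\vk_i}$, where the first inclusion is the definition of $\maxrad$ together with closedness of the level set of the continuous $\phi$, and the second is Remark~\ref{rmk:phivk.less.phi}.

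The two cases close the induction. I expect the inner case to be the real obstacle: precisely where the feedback switches to a deeper rule, the Lyapunov decrease that drives the outer estimate breaks down, and one must instead guarantee that a single sampling step cannot carry the measure out of $G_{Q_i}^{\vk_i}$. This is the role of the three matching quantities — the inner level set sits inside $\ball_{Q_{i-1}}(\hm)$, one step displaces the measure by at most $Q_{i-1}$ in $W_2$, and the construction forces $2Q_{i-1}\le\maxrad(Q_i)$ — and it is exactly this matching that dictates the step bound $\min\{\delta_i, Q_{i-1}/C_2(Q_i)\}$ in the statement. A minor technical point to handle with care is the implicit dependence of $C_2(Q_i)$ on the step bound in Proposition~\ref{prp:prev24}, which should be pinned down by fixing the upper step $\delta$ entering $C_2$ before imposing the constraint.
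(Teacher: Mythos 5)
Your proof is correct and takes essentially the same route as the paper's: induction over sampling steps, with the outer case $m_{t_k}\in H_{i-1}$ handled by the local stabilization machinery (the paper cites Theorem~\ref{thm:local}; you go directly to Lemma~\ref{lmm:phivk.notinc}, which is what that theorem packages) and the inner case by the feedback-independent displacement bound of Proposition~\ref{prp:prev24}, the triangle inequality, and the spacing $2Q_{i-1}\le\maxrad(Q_i)$ from~\eqref{eq:R.dfn}. Your only deviations are cosmetic improvements: you merge the paper's Cases 2 and 3 into a single inner case via $G_{Q_{i-1}}^{\vk_{i-1}}\subseteq\oall_{Q_{i-1}}(\hm)$, and you rightly flag the $\delta$-dependence of $C_2$ and the typo $G_{R_i}^{\vk_i}$ for $G_{Q_i}^{\vk_i}$, both of which the paper leaves implicit.
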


\begin{proof}
Consider a natural number $j$.
Let us proof that $m_{t_j} \in G_{Q_i}^{\vk_i}$ imply $m_t \in G_{Q_i}^{\vk_i}$ for all $t \in [t_j,t_{j+1}]$.
From equality~\eqref{eq:diff.g.h}, we have the decomposition:
\[
G_{Q_i}^{\vk_i}
= \bigcup\limits_{l=-\infty}^{i-1} H_l \cup \{ \hm \}.
\]
We will analyze three cases.

\textbf{Case 1:} $\ds m_{t_j} \in H_{i-1}$.
The result follows directly from Theorem~\ref{thm:local}.

\textbf{Case 2:} $\ds m_{t_j} \in H_{l-1}$ for some $l \leq i-1$.
Let $\hatt$ be the maximal time $T \in [t_j,t_{j+1}]$ satisfying $m_t \in \ball_{Q_i}(\hm)$ for all $t \in [t_j,T]$.
Using the triangle inequality, Proposition~\ref{prp:prev24} and the second assumption of the lemma, we obtain the inequality
\[
W_2(m_t,\hm)
\leq W_2(m_t,m_{t_j}) + W_2(m_{t_j},\hm)
\leq C_2(Q_i) (t - t_j) + Q_l
< Q_{i-1} + Q_l
\leq 2 Q_{i-1},
\]
for all $t \in [t_i,\hatt]$.
The trajectory's continuity and condition~\eqref{eq:R.dfn} imply $\hatt = t_{j+1}$, hence
\[
m_t \in \ball_{2Q_{i-1}} \subseteq G_{Q_i}^{\vk_i}, \quad \text{for } t \in [t_j,t_{j+1}].
\]

\textbf{Case 3:} $\ds m_{t_j} = \hm$.
Using the argument of previous case and replacing $Q_l$ in inequality with $0$, we obtain
\[
m_t \in \ball_{Q_{i-1}} \subseteq G_{Q_i}^{\vk_i}, \quad \text{for } t \in [t_j,t_{j+1}].
\]
\end{proof}

Now, finally, we can prove the theorem about s-stabilization.

\begin{thm} \label{thm:global} 
If continuity equation~\eqref{eq:sys} admits CLP, then the feedback $\hk$ is s-stabilizing.
\end{thm}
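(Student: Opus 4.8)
The plan is to exploit the nested family $\{G_{Q_i}^{\vk_i}\}_{i}$ together with the local-stabilization Theorem~\ref{thm:local} and the invariance Lemma~\ref{lmm:sampling.feedback}. Fix $(r,R)\in\ord^2$. Since $\{Q_i\}$ is strictly increasing with $Q_i\to 0$ as $i\to-\infty$ and $Q_i\to+\infty$ as $i\to+\infty$, and since $G_{Q_i}^{\vk_i}\subseteq\ball_{Q_i}(\hm)$ while $\ball_{2Q_i}(\hm)\subseteq G_{Q_{i+1}}^{\vk_{i+1}}$, I take the least index $n=n(R)$ with $\ball_R(\hm)\subseteq G_{Q_n}^{\vk_n}$ and the greatest index $m=m(r)$ with $Q_m\le r$, so that $G_{Q_m}^{\vk_m}\subseteq\ball_{Q_m}(\hm)\subseteq\ball_r(\hm)$ (note $r<R$ forces $m<n$). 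Only the finitely many layers $H_{m},\dots,H_{n-1}$ are then relevant, and I set $\delta(r,R)\triangleq\min_{m<l\le n}\min\{\delta_l,\,Q_{l-1}/C_2(Q_l)\}$, $M(R)\triangleq Q_{n(R)}$, and $T(r,R)\triangleq\sum_{l=m+1}^{n}\big(T_l+\delta(r,R)\big)$.

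Properties~(2) and~(3) then follow quickly. For any $m_*$ with $W_2(m_*,\hm)\le R$ we have $m_*\in\ball_R(\hm)\subseteq G_{Q_n}^{\vk_n}$, so Lemma~\ref{lmm:sampling.feedback} — applicable because $\hd<\delta(r,R)\le\min\{\delta_n,\,Q_{n-1}/C_2(Q_n)\}$ and $\theta\in\trajset\big(\hd,\delta(r,R)\big)$ — keeps the whole $\theta$-trajectory $m_\cdot=\trajm{\cdot}{\theta}{\hk}$ inside $G_{Q_n}^{\vk_n}\subseteq\ball_{Q_n}(\hm)$; hence $W_2(m_t,\hm)\le Q_n=M(R)$ for all $t\ge0$, which is~(2). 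Because $Q_{i+1}\ge 2Q_i$, the index $n(R)$ tends to $-\infty$ as $R\downarrow0$, so $M(R)=Q_{n(R)}\to0$, giving~(3).

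Property~(1) is obtained by a finite descent through the layers. Each $G_{Q_l}^{\vk_l}$ is invariant under $\hk$ by Lemma~\ref{lmm:sampling.feedback}, so once the trajectory enters a smaller set of the family it never leaves it, and it suffices to bound the time spent in each layer. Suppose at some sampling time $s$ the trajectory lies in $G_{Q_l}^{\vk_l}$ but not in $G_{Q_{l-1}}^{\vk_{l-1}}$, i.e.\ $m_s\in H_{l-1}$. As long as the sampled state stays in $H_{l-1}$, definition~\eqref{eq:feedback.global} makes $\hk$ coincide with $\kkea{\vk_l}{\varepsilon_l}$, so on that stretch the global trajectory agrees with the local trajectory $\trajm{\cdot}{\theta}{\kkea{\vk_l}{\varepsilon_l}}$ issued from $m_s$. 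Applying Theorem~\ref{thm:local} to the pair $(q_l,Q_l)$ drives this local trajectory into $\ball_{q_l}(\hm)\subset G_{Q_{l-1}}^{\vk_{l-1}}$ for all times $\ge s+T_l$, while keeping it in $G_{Q_l}^{\vk_l}$. Hence at the first sampling time $t_j\ge s+T_l$ the local trajectory already lies in $G_{Q_{l-1}}^{\vk_{l-1}}$; were the two trajectories still equal at $t_j$ this would contradict $m_{t_j}\in H_{l-1}$, so some sampling time in $[s,t_j]$ must have carried the global state into $G_{Q_{l-1}}^{\vk_{l-1}}$. The trajectory therefore descends one layer within time $T_l+\delta(r,R)$. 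Iterating from $l=n$ down to $l=m+1$, a finite number of steps, the trajectory reaches $G_{Q_m}^{\vk_m}\subseteq\ball_r(\hm)$ no later than $T(r,R)$ and, by invariance, stays in $\ball_r(\hm)$ afterwards, establishing~(1).

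The step I expect to require the most care is this descent: because $\hk$ switches according to the \emph{sampled} state $m_{t_j}$ rather than the current state, the argument must proceed through the coincidence of the global and local sampled trajectories and absorb one partition interval of slack at each layer transition. This is also what forces $\delta(r,R)$ to be a single bound valid across all finitely many traversed layers, so that the hypotheses of Theorem~\ref{thm:local} and Lemma~\ref{lmm:sampling.feedback} hold simultaneously at every transition — including the renewal of the partition and of the $T_l$-clock each time the trajectory enters a new layer.
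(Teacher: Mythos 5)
Your proposal reproduces the paper's proof essentially verbatim: the same extreme indices ($n(R)$ is the paper's $N(R)$, $m(r)$ its $K(r)$), the same definitions of $M(R)=Q_{n(R)}$ and $T(r,R)=\sum(T_l+\delta(r,R))$, Lemma~\ref{lmm:sampling.feedback} for property~(2) and for terminal invariance, the limit $n(R)\to-\infty$ for property~(3), and the same layer-by-layer descent for property~(1), in which Theorem~\ref{thm:local} forces the sampled state into $G_{Q_{l-1}}^{\vk_{l-1}}$ within $T_l$ plus one partition interval of slack (your explicit handling of the coincidence of global and local sampled trajectories is a point the paper leaves implicit). One small indexing slip to repair: your $\delta(r,R)=\min_{m<l\le n}\min\{\delta_l,\,Q_{l-1}/C_2(Q_l)\}$ omits the level-$m$ constraint $\min\{\delta_m,\,Q_{m-1}/C_2(Q_m)\}$, which your final step needs in order to invoke Lemma~\ref{lmm:sampling.feedback} for the invariance of $G_{Q_m}^{\vk_m}$ itself (invariance at level $m+1$ only confines the trajectory to $\ball_{Q_{m+1}}(\hm)$, and $Q_{m+1}$ may exceed $r$), whereas the paper's choice $\min\bigl\{\min_{K(r)\le i\le N(R)}\delta_i,\ Q_{K(r)-1}/C_2(Q_{N(R)})\bigr\}$ dominates all the layerwise bounds down to $i=K(r)$ by the monotonicity of $C_2$.
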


\begin{proof}
Define $N(R)$ as the smallest integer satisfying
\[
\ball_R(\hm) \subseteq G_{Q_{N(R)}}^{\vk_{N(R)}}.
\]
Then, we choose an arbitrary number $K(r) < N(R)$ such that
\[
G_{Q_{K(r)}}^{\vk_{K(r)}} \subseteq \ball_r(\hm).
\]
Let us put
\begin{align*}
\delta(r,R) &\triangleq \min\left\{ \min\limits_{K(r) \leq i \leq N(R)} \delta_i ,\ \frac{Q_{K(r)-1}}{C_2(Q_{N(R)})} \right\},\\
0 < \hd &< \delta(r,R),\\
T(r,R) &\triangleq \sum\limits_{i=K(r)+1}^{N(R)} \big(T_i+\delta(r,R)\big),\\
M(R) &\triangleq Q_{N(R)}.
\end{align*}

We consider a partition $\theta \in \trajset\big(\hd,\delta(r,R)\big)$ and $\theta$-trajectory $m_\cdot = \trajm{\cdot}{\theta}{\kkea{\vk}{\varepsilon(r,R)}}$ starting from some measure $m_* \in \ball_R(\hm)$.
Lemma~\ref{lmm:sampling.feedback} guarantees that, for all $t \geq 0$,
\[
m_t \in G_{Q_{N(R)}}^{\vk_{N(R)}} \subseteq \ball_{Q_{N(R)}}(\hm) = \ball_{M(R)}(\hm).
\]

If $m_* \not\in G_{Q_{K(r)}}^{\vk_{K(r)}}$, then there exists an integer number $N' \in \big[K(r)+1,N(R)\big]$ with $m_* \in H_{N'-1}$.
Theorem~\ref{thm:local} ensures that
\[
m_{T_{N'}} \in \ball_{q_{N'}}(\hm) \subset G_{Q_{N'-1}}^{\vk_{N'-1}}.
\]
Thus, by the trajectory's continuity, there exists an integer number $j$ with $t_j < T_{N'}+\delta$ such that $m_t \in G_{Q_{N'-1}}^{\vk_{N'-1}}$ for some $t \in (t_{j-1},t_j]$.
Applying this argument sequentially, we obtain that
\[
m_{t_i} \in G_{Q_{K(r)}}^{\vk_{K(r)}} \subseteq \ball_r(\hm)
\]
for some partition time $t_i < T(r,R)$.
Additionally, Lemma~\ref{lmm:sampling.feedback} guarantees $m_t \in G_{Q_{K(r)}}^{\vk_{K(r)}} \subseteq \ball_r(\hm)$ for all $t \geq t_i$ and, consequently, for all $t \geq T(r,R)$.

Due to the choice of $N(R)$, we have that
\[
\lim\limits_{R \to 0} N(R) = -\infty,
\]
and, consequently,
\[
\lim\limits_{R \to 0} M(R)
= \lim\limits_{R \to 0} Q_{N(R)}
= 0,
\]
which completes the proof.
\end{proof}
\printbibliography
\end{document}